\definecolor{midnight_blue}{RGB}{25,25,112}
\definecolor{navy}{RGB}{0,0,128}
\definecolor{corn_blue}{RGB}{100,149,237}
\definecolor{dark_sea_green}{RGB}{143,188,143}
\definecolor{medium_sea_green}{RGB}{60,179,113}
\definecolor{sea_green}{RGB}{46,139,87}
\definecolor{light_sea_green}{RGB}{32,178,170}
\definecolor{sienna}{RGB}{160,82,45}
\definecolor{tan}{RGB}{210,180,140}
\definecolor{sky_blue}{RGB}{135,206,250}
\definecolor{pale_green}{RGB}{152,251,152}
\definecolor{royal_blue}{RGB}{65,105,225}
\definecolor{tomato}{RGB}{255,99,71}
\newtheorem{theorem}{Theorem}[section]
\newtheorem{lemma}[theorem]{Lemma}
\newtheorem{proposition}{Proposition}[section]
\theoremstyle{definition}
\newtheorem{definition}[theorem]{Definition}
\theoremstyle{remark}
\newtheorem{remark}[theorem]{Remark}
\numberwithin{equation}{section}
\begin{document}

\pdfoutput=1

\title{HARNACK INEQUALITIES FOR SYMMETRIC STABLE L\'EVY PROCESSES}

\author{MARINA SERTIC}
\address{Fakultät für Mathematik, Universität Bielefeld, Universitätsstraße 25, 33615 Bielefeld}
\email{msertic@math.uni-bielefeld.de}
\thanks{The author appreciates the support of the International Graduate College "Stochastics and Real World Models", Universität Bielefeld.}



\date{}



\begin{abstract}
In this paper we consider Harnack inequalities with respect to a symmetric $\alpha$-stable L\'evy process $X$ in $\mathbb{R}^d$, $\alpha \in (0,2)$, $d\geq 2$.
We study the example from the article \cite{bg-sz-1}. There, the authors have associated the Harnack inequality with the relative Kato condition, which is a condition on the L\'evy measure.
By checking the condition, in the case $\alpha \in (0,1)$, they have established that the Harnack inequality does not hold.
We give an alternative proof of this fact, using the setting of \cite{bg-sz-1}. We define the harmonic functions explicitly.
For a given starting point of the process, we examine the probability of hitting a certain set at the first exit time of a unit ball.

Moreover, we also examine the weak Harnack inequality for a certain class of symmetric $\alpha$-stable L\'evy processes. We consider a symmetric $\alpha$-stable L\'evy process, $\alpha \in (0,2)$, for which a spherical
part $\mu$ of the L\'evy measure is a spectral measure. In addition, we assume that $\mu$ is absolutely continuous with respect to the uniform measure $\sigma$ on the sphere 
and impose certain bounds on the corresponding density. Eventually, we show that the weak Harnack inequality holds.
\end{abstract}

\maketitle



\subsection*{Acknowledgment} 
The author is thankful to Moritz Kassmann and Mateusz Kwa\'snicki for valuable discussions.

\hfill \break

\section{Introduction}
In the paper, we use the notation
\begin{align*}
 B(x_0,r)=\{x : |x-x_0| \leq r\}, r\geq 0,
\end{align*}
and
\begin{align*}
 B_{r}=B(0,r), r\geq 0.
\end{align*}

We consider \emph{a symmetric $\alpha$-stable L\'evy process},
which has the characteristic function of the form
\begin{align}\label{k_fun}
 \mathbb{E}^{0}\big[e^{i u\cdot X_t}\big]=e^{-t\Phi(u)}, u \in \mathbb{R}^d,\ t\geq 0,
\end{align}
where the characteristic exponent $\Phi$ is given by
\begin{align}\label{k_e}
\Phi(u)=\int_{\mathbb{S}^{d-1}} |u\cdot \xi|^\alpha \mu (\mathrm{d}\xi).
\end{align}
The measure $\mu$ is symmetric, finite and non-zero on $\mathbb{S}^{d-1}$ (see \cite{sato}, Theorem 14.13). 
Le the measure $\mu$ be absolutely continuous with respect to the uniform surface measure on $\mathbb{S}^{d-1}$ and denote its density by $f_\mu$.

\emph{The potential density (or the heat kernel) $p(t,x,y)=p(t,y-x)$} is determined by the Fourier transform 
\begin{align*}
 \int_{\mathbb{R}^d} e^{i\xi\cdot x}\ p(t,x)\ \mathrm{d}x=e^{-t\Phi(\xi)},\ \xi \in \mathbb{R}^d,\ t \geq 0.
\end{align*}

\begin{definition}
\emph{The Green function (or the potential kernel)} is defined by
\begin{align}\label{Gr_f}
 G(x,y)=\int_{0}^\infty p(t,x,y)\ \mathrm{d}t,\ x,y \in \mathbb{R}^d.
\end{align}
\end{definition}

\begin{definition}
Let $D$ be an open set, $D\subset \mathbb{R}^d$. \emph{The Green function of $X^D$} is defined by
\begin{align}\label{Gr_f_D}
G_D(x,y)=G(x,y)-\mathbb{E}^x[G(X_{\tau_D}, y)],\ x,y\in D.
\end{align}
\end{definition}

\hfill \break

\section{Harnack Inequality}
\begin{definition}
A non-negative Borel measurable function $u:\mathbb{R}^d \to \mathbb{R}$ is \emph{harmonic in an open set $D\subset \mathbb{R}^d$} with respect to the process $X$ if
\begin{align*}
 u(x)=\mathbb{E}^{x}[u(X_{\tau_U})],\ x\in U,
\end{align*}
for every bounded open set $U$ such that $\bar{U}\subset D$. Here, $\tau_U=\inf\{t>0 : X_t \notin U\}$ is the first exit time from the set $U$.
\end{definition}

\begin{definition}
\emph{The Harnack inequality} for a symmetric $\alpha$-stable L\'evy processes $X$ holds true if there is a constant $K\geq 1$ such that for every non-negative function $u\colon \mathbb{R}^d\rightarrow \mathbb{R}$
which is harmonic in $B_1$ with respect to $X$, the inequality
\begin{align*}
u(x)\leq K\cdot u(y),\ x,y \in B_{1/2}.
\end{align*}
holds true. 
\end{definition}

\subsection{Construction of the Sequence of Harmonic Functions}
\begin{lemma}{\label{lema}}
Let $c>1$.
There are sequences $(\alpha_n)$ and $(\beta_n)$ of positive numbers such that:
\begin{align}
&A=\sum\limits_{n=1}^\infty \alpha_n<\frac{\pi}{2},\ \ B=\sum\limits_{n=1}^\infty \beta_n<\frac{\pi}{2},\\
&\lim_{n \to \infty} (A_n+B_n)=A+B=\frac{\pi}{2},\\
&\lim_{n \to \infty}\frac{\tan(A_n + B_{n+1})}{\tan(A_n + B_n)} = +\infty \label{uvjet},\\
&(c+2)\cdot \tan(A_n + B_{n+1}) \leq (c-1)\cdot \tan(A_{n+1} + B_{n+1}),\ n \in \mathbb{N}\label{uvjet_d},
\end{align}
where $A_n$ and $B_n$ denote the
partial sum of the sequences $(\alpha_n)$ and $(\beta_n)$, respectively, i.e. $A_n= \sum\limits_{k=1}^n \alpha_k$, $B_n= \sum\limits_{k=1}^n \beta_k$.
\end{lemma}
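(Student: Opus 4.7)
The plan is to reparametrize in terms of the ``defects'' from $\pi/2$: set $\epsilon_n := \pi/2 - (A_n + B_n)$ and $\delta_n := \pi/2 - (A_n + B_{n+1})$. Then $\delta_n = \epsilon_n - \beta_{n+1}$ and $\epsilon_{n+1} = \delta_n - \alpha_{n+1}$, so specifying positive sequences $(\alpha_n), (\beta_n)$ is equivalent to specifying positive $\alpha_1, \beta_1$ together with a sequence of pairs $0 < \epsilon_{n+1} < \delta_n < \epsilon_n$, via $\alpha_{n+1} := \delta_n - \epsilon_{n+1}$ and $\beta_{n+1} := \epsilon_n - \delta_n$.

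Using $\tan(\pi/2 - x) = \cot x$, I would next translate the four conditions. The first two lines reduce to $\epsilon_n \to 0^{+}$ together with $\alpha_1 + \beta_1 = \pi/2 - \epsilon_1$; condition \eqref{uvjet} becomes $\cot\delta_n / \cot\epsilon_n \to \infty$, and since $\cot x \sim 1/x$ as $x \to 0^+$ this is equivalent to $\epsilon_n / \delta_n \to \infty$; condition \eqref{uvjet_d} becomes $(c+2)\tan\epsilon_{n+1} \leq (c-1)\tan\delta_n$, which for any fixed $\eta > 0$ is implied by $(c+2)(1+\eta)\,\epsilon_{n+1} \leq (c-1)\delta_n$ as soon as the angles involved are small enough that $\tan x \leq (1+\eta)x$.

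For the concrete construction I would take $\epsilon_n := q^{2^n}$ for some $q \in (0, 1)$ to be chosen, which decays super-geometrically ($\epsilon_{n+1}/\epsilon_n = q^{2^n} \to 0$), and define $\delta_n$ to be the geometric mean of consecutive defects,
\begin{equation*}
\delta_n := \sqrt{\epsilon_n\,\epsilon_{n+1}}.
\end{equation*}
Then automatically $\epsilon_{n+1} < \delta_n < \epsilon_n$, and the symmetric identity
\begin{equation*}
\frac{\epsilon_n}{\delta_n} \;=\; \frac{\delta_n}{\epsilon_{n+1}} \;=\; \sqrt{\epsilon_n/\epsilon_{n+1}} \;=\; q^{-2^{n-1}} \;\longrightarrow\; \infty
\end{equation*}
delivers \eqref{uvjet} and, after picking $q$ small enough that already $q^{2^{n-1}} \leq (c-1)/[(c+2)(1+\eta)]$ at $n=1$ (and hence for every $n$), also \eqref{uvjet_d}. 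I would then set $\alpha_1 := \beta_1 := (\pi/2 - \epsilon_1)/2$, so that $A + B = \lim_n (A_n + B_n) = \pi/2$ and, since $A \geq \alpha_1 > 0$ and $B \geq \beta_1 > 0$, both partial sums are strictly less than $\pi/2$.

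The main obstacle is the opposing pressure of \eqref{uvjet} and \eqref{uvjet_d}: the former wants $\delta_n \ll \epsilon_n$ (so that nearly all of $\epsilon_n$ is spent on $\beta_{n+1}$), while the latter wants $\epsilon_{n+1} \ll \delta_n$ (so that nearly all of $\delta_n$ is spent on $\alpha_{n+1}$). A purely geometric choice $\epsilon_n = r^n$ gives a constant ratio $\epsilon_n/\delta_n$ and fails \eqref{uvjet}; it is exactly the combination of super-geometric decay of the $\epsilon_n$ with the symmetric geometric-mean definition of $\delta_n$ that makes both ratios blow up at the same square-root rate and settles the two competing estimates in one stroke.
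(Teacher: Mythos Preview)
Your argument is correct and takes a genuinely different route from the paper. The paper proceeds by a soft inductive construction: at step~$n$ it first picks $\beta_{n+1}$ close enough to $\pi/2-(A_n+B_n)$ so that $\tan(A_n+B_{n+1})/\tan(A_n+B_n)\ge n+1$, and then picks $\alpha_{n+1}$ in the interval $\bigl(\arctan(K\tan(A_n+B_{n+1}))-(A_n+B_{n+1}),\,\pi/2-(A_n+B_{n+1})\bigr)$ with $K=(c+2)/(c-1)$, appealing only to the blow-up of $\tan$ near $\pi/2$ for feasibility. No explicit sequences are exhibited; indeed, the paper follows the lemma with a remark (and an appendix lemma) showing that several natural explicit choices such as $\alpha_k=a^{-k},\,\beta_k=b^{-k}$ or $\alpha_k=2^{-k^3},\,\beta_k=2^{-k^2}$ all fail condition~\eqref{uvjet}. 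Your reparametrization in the defects $\epsilon_n=\pi/2-(A_n+B_n)$, $\delta_n=\pi/2-(A_n+B_{n+1})$ converts the two tangent conditions into the purely multiplicative requirements $\epsilon_n/\delta_n\to\infty$ and $\delta_n/\epsilon_{n+1}\ge K(1+\eta)$, and the doubly exponential choice $\epsilon_n=q^{2^n}$ together with the geometric mean $\delta_n=\sqrt{\epsilon_n\epsilon_{n+1}}$ makes both ratios equal to $q^{-2^{n-1}}$, dispatching \eqref{uvjet} and \eqref{uvjet_d} simultaneously. The only point to make fully explicit is that $q$ must also be taken small enough that $\epsilon_2=q^4$ lies in the range where $\tan x\le(1+\eta)x$, so that your linearized inequality implies \eqref{uvjet_d} already from $n=1$; this is easily absorbed into the smallness condition on $q$. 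Your approach buys an explicit, closed-form example (answering in the affirmative a question the paper's remark leaves open), while the paper's greedy argument is shorter and avoids any quantitative bookkeeping.
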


\begin{proof}
We construct inductively sequences $(\alpha_n)$ and $(\beta_n)$ of positive numbers which fulfill the conditions of the lemma.
Set $K=(c+2)/(c-1)$.

Choose $\alpha_1$ and $\beta_1$ so that
\begin{align*}
 \alpha_1 + \beta_1 <  \pi/2.
\end{align*}
Next, choose $\beta_2$ and $\alpha_2$ so that:
\begin{align*}
&\beta_2 < \pi/2 - (A_1 + B_1), \\
&\arctan\big(K\cdot \tan(A_1+B_2)\big) - (A_1 + B_2) <\alpha_2 < \pi/2 - (A_1 + B_2).
\end{align*}
Assume $\beta_1, \ldots, \beta_n$, $\alpha_1, \ldots, \alpha_n$ are
chosen. Choose $\beta_{n+1}$ and $\alpha_{n+1}$ so that:
\begin{align*}
&\beta_{n+1} < \pi/2 - (A_n + B_n), \\
&n+1 \leq \frac{\tan(A_n + B_{n+1})}{\tan(A_n + B_n)},\\
&\arctan\big(K\cdot \tan(A_n+B_{n+1})\big) - (A_n + B_{n+1}) < \alpha_{n+1}\\
&\alpha_{n+1}< \pi/2 - (A_n + B_{n+1}).
\end{align*}

Notice that the choice of the sequence $(\beta_n)$ is possible, since
\begin{align*}
 \lim_{h \nearrow \pi/2 - c} \frac{\tan(h+c)}{\tan(c)} = +\infty,
\end{align*}
for fixed $c \in (0,\frac{\pi}{2})$.

Furthermore, the choice of of the sequence $(\alpha_n)$ is possible due to the choice of $(\beta_{n})$.

Now, from the choice of $(\alpha_n)$, we have
\begin{align*}
 \arctan\big(K\cdot \tan(A_n+B_{n+1})\big) <A_{n+1} + B_{n+1} < \pi/2,
\end{align*}
which, together with the choice of $(\beta_n)$, implies
\begin{align*}
 \lim_{n \to \infty} (A_n+B_n)=\frac{\pi}{2}.
\end{align*}
\end{proof}


\begin{remark}
At this point, we would like to mention that none of the following sequences satisfies the property \eqref{uvjet}
from the Lemma \ref{lema}.

\begin{enumerate}
\item $\alpha_k = 2^{-k}, \ \beta_k = \frac{1}{k(k+1)}$
\item $\alpha_k = a^{-k}, \ \beta_k = b^{-k}$, \  $1 < b \leq a$,
\item $\alpha_k =  \frac{1}{k(k+1)} , \ \beta_k = k^{-1-\delta},\ \delta > 0$, 
\item $\alpha_k = 2^{-(k^3)}, \ \beta_k = 2^{-(k^2)}$,\\
      $\alpha_k = a^{-(k^3)}, \ \beta_k = b^{-(k^2)}$,\quad $a>1,b>1$
\end{enumerate}
The details can be found in Lemma \ref{lema_pr} (Appendix \ref{Ap}). 
\end{remark}

\begin{definition}
Let $\delta, x \in (0,1)$ and $(\alpha_n)$ and $(\beta_n)$ be sequences as in
Lemma \ref{lema}. We define the sets
\begin{align*}
S_n^{\delta}(x)&=\{(\delta,y):A_{n-1}+B_n<\arctan\Big(\frac{y}{x+\delta}\Big)<A_n+B_n\},\\
L_n^{\delta}(x)&=\bigcup_{z\in [x,1]} S_n^{\delta}(z).
\end{align*}
\end{definition}

\begin{remark}
Notice that, by the definitions of $S_n^{\delta}(x)$ and $L_n^{\delta}(x)$, we have
\begin{align}\label{L_n}
L_n^{\delta}(x)=\{(\delta,y):(x+\delta)&\cdot\tan(A_{n-1}+B_n)<y<(1+\delta)\tan(A_n+B_n)\}.
\end{align}
\end{remark}

The sets $S_n^{\delta}(x)$ and $L_n^{\delta}(x)$ are illustrated at the pictures that follow.


\includegraphics[scale=1]{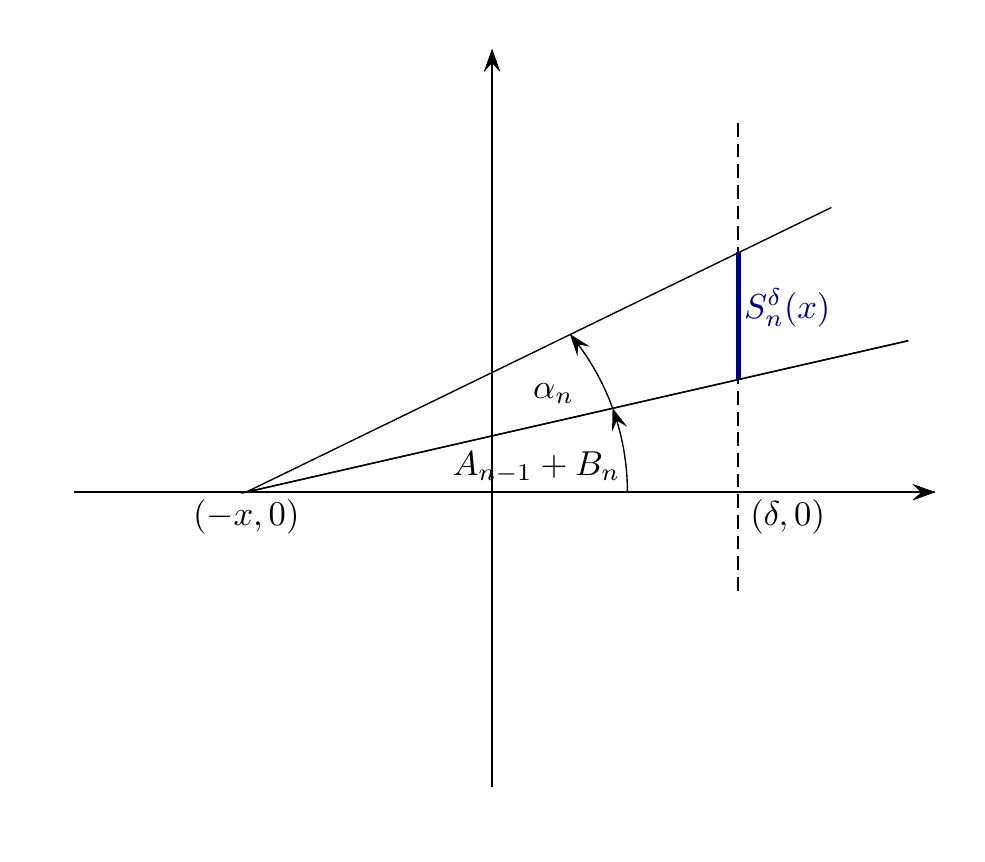}

\includegraphics[scale=1]{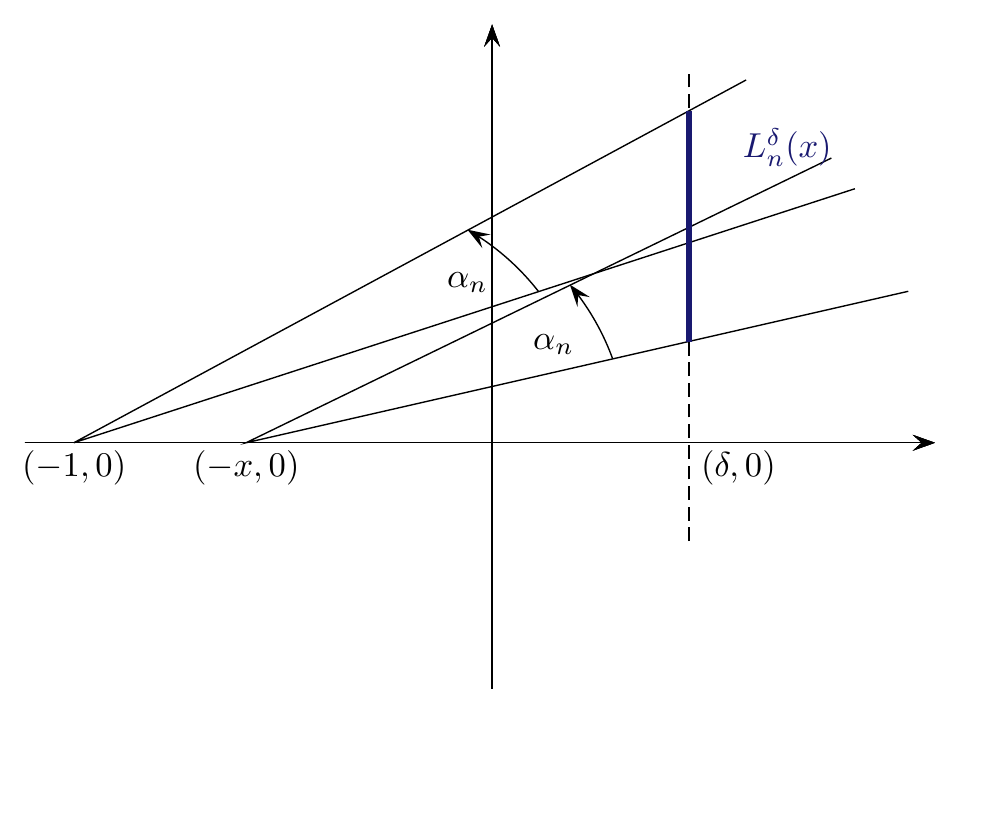}

\begin{remark}
$S_n^{\delta}(x)$ describes the set on the line $\{(x,y): x=\delta, y>0\}$,
which is seen from the point $(-x,0)$ by the cone $K_n$, and
$L_n^{\delta}(x)$ describes the set on the line $\{(x,y):
x=\delta, y>0\}$ which is seen from the set $[-1,-x]\times \{0\}$ by the
cone $K_n$.
\end{remark}

\begin{proposition}
Let $(\alpha_n)$ and $(\beta_n)$ be sequences as in Lemma \ref{lema}.
For every $\delta\in (0,1)$ and every $x \in (0,1)$ there is
$n_0=n_0(\delta,x)$ such that for every $n\geq n_0$ 
$$
L_n^{\delta}(x)\cap L_{n+1}^{\delta}(x)=\emptyset.
$$
\end{proposition}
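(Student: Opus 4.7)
The plan is to reduce the claimed disjointness to a direct consequence of condition \eqref{uvjet} in Lemma \ref{lema}. Both $L_n^{\delta}(x)$ and $L_{n+1}^{\delta}(x)$ lie on the vertical line $\{(\delta,y):y>0\}$, so the disjointness claim is about two open intervals in the $y$-coordinate. Using the description \eqref{L_n}, I would write them as
\begin{align*}
L_n^{\delta}(x)      &= \{(\delta,y): a_n < y < b_n\},\\
L_{n+1}^{\delta}(x)  &= \{(\delta,y): a_{n+1} < y < b_{n+1}\},
\end{align*}
where $a_n=(x+\delta)\tan(A_{n-1}+B_n)$ and $b_n=(1+\delta)\tan(A_n+B_n)$.

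Next I would observe that the sequence $(A_k+B_k)$ is strictly increasing and, by Lemma \ref{lema}, stays in $(0,\pi/2)$; hence $\tan$ preserves the order of the relevant arguments. In particular, $A_{n-1}+B_n < A_n+B_n < A_n+B_{n+1} < A_{n+1}+B_{n+1}$, so $a_n<a_{n+1}$ and $b_n<b_{n+1}$. Consequently the two open intervals are disjoint as soon as $b_n \leq a_{n+1}$, i.e.\ as soon as
\begin{align*}
(1+\delta)\tan(A_n+B_n) \;\leq\; (x+\delta)\tan(A_n+B_{n+1}),
\end{align*}
which rearranges to
\begin{align*}
\frac{\tan(A_n+B_{n+1})}{\tan(A_n+B_n)} \;\geq\; \frac{1+\delta}{x+\delta}.
\end{align*}

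Since $\delta\in(0,1)$ and $x\in(0,1)$ are fixed, the right-hand side is a fixed finite positive constant; the left-hand side, by condition \eqref{uvjet}, tends to $+\infty$ as $n\to\infty$. Therefore there exists $n_0=n_0(\delta,x)$ such that the inequality above holds for all $n\geq n_0$, which proves the proposition. The only conceptual step is recognizing that the geometric disjointness translates into the ratio condition \eqref{uvjet}; all of the hard work has already been carried out in the construction of the sequences $(\alpha_n)$ and $(\beta_n)$ in Lemma \ref{lema}, so there is no real obstacle beyond this translation.
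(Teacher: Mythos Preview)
Your argument is correct and is essentially the paper's own proof, only spelled out in more detail: the paper simply recalls the explicit interval description \eqref{L_n} of $L_n^{\delta}(x)$ and then says that condition \eqref{uvjet} immediately gives the existence of $n_0$. Your translation of the disjointness into the inequality $(1+\delta)\tan(A_n+B_n)\le (x+\delta)\tan(A_n+B_{n+1})$ and the subsequent appeal to \eqref{uvjet} is exactly the content the paper leaves implicit.
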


\begin{proof}
Let $\delta, x\in (0,1)$.
Recall, by \eqref{L_n}, we have
\begin{align*}
L_n^{\delta}(x)=\{(\delta,y):(x+\delta)&\cdot\tan(A_{n-1}+B_n)<y<(1+\delta)\tan(A_n+B_n)\}.
\end{align*}
Now, from Lemma \ref{lema}, condition \eqref{uvjet}, it follows that there is $n_0=n_0(\delta,x)$ such that for every $n\geq n_0$,
\begin{align*}
L_n^{\delta}(x)\cap
L_{n+1}^{\delta}(x)=\emptyset,
\end{align*}
which proves the proposition.
\end{proof}

\begin{definition}
Let $\delta \in(0,1)$ and $(\alpha_n)$ and $(\beta_n)$ be sequences as in
Lemma \ref{lema}. For $z=(z_1, z_2), y=(y_1, y_2)\in \mathbb{R}^2$, we
define the sets
\begin{align*}
\tilde{S}_n^{\delta}(z)&=\{(\delta,u):A_{n-1}+B_n<\arctan\Big(\frac{u-z_2}{\delta+z_1}\Big)<A_n+B_n\},\\
\tilde{L}_n^{\delta}(y)&=\bigcup_{\{z:z_1\in [y_1,1], z_2=y_2\}}\tilde{S}_n^{\delta}(z).
\end{align*}
\end{definition}

\begin{remark}
$\tilde{S}_n^{\delta}(z_1,z_2)$ describes the set on the line $\{(x,u): x=\delta, u>0\}$,
which is seen from the point $(-z_1,z_2)$ by the cone $K_n$, and
$\tilde{L}_n^{\delta}(y_1,y_2)$ describes the set on the line $\{(x,u):
x=\delta, u>0\}$, which is seen from the set $[-1,-y_1]\times \{y_2\}$ by the
cone $K_n$.
\end{remark}

\begin{remark}
Let $w=(w_1,w_2)\in\mathbb{R}^2$. Notice that by the definitions of $\tilde{S}_n^{\delta}(z)$ and $\tilde{L}_n^{\delta}(y)$, we have
\begin{equation}\label{til_L_n}
\begin{aligned}
\tilde{L}_n^{\delta}(w)=\{(\delta,u): &(w_1+\delta)\cdot\tan(A_{n-1}+B_n)+w_2 <\\
				      &<u<(1+\delta)\cdot\tan(A_n+B_n)+w_2\}.
\end{aligned}
\end{equation}
\end{remark}

By means of the two propositions below, we will define the sequence of harmonic functions so that the Harnack inequality does not hold. To be more precise, we will construct the sequence of sets $(B_n)$ and consequently
the sequence of functions $(u_n)$,
so that
\begin{align*}
 \lim_{n\rightarrow \infty}\frac{u_n(1/2,0)}{u_n(0,0)}=0.
\end{align*}

\includegraphics[scale=1]{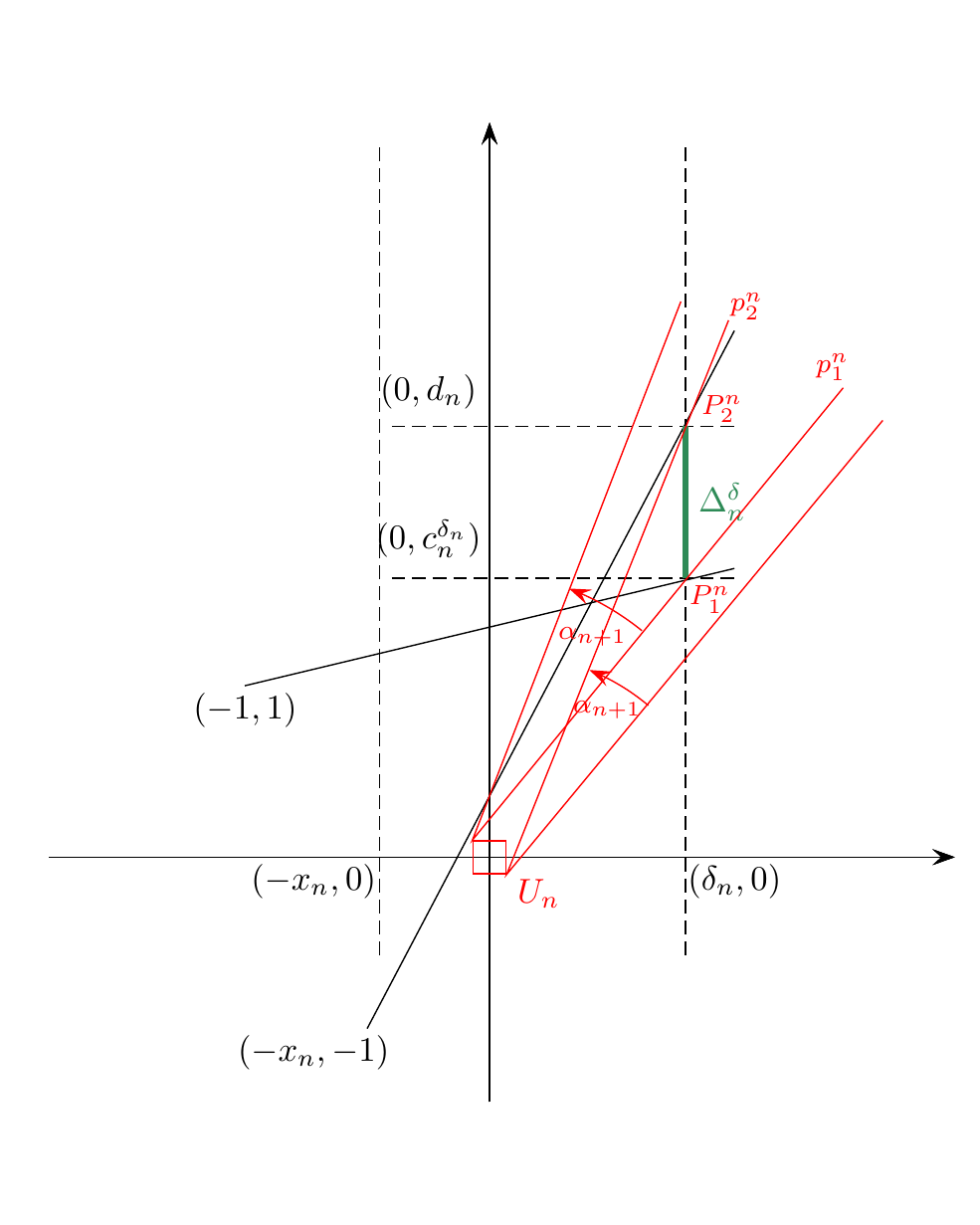}

\begin{proposition}{\label{prop4}}
Let $(\alpha_n)$, $(\beta_n)$ be sequences as in Lemma \ref{lema}.
There are sequences $(\delta_n)$, $(x_n)$, $(y_n)$, $(c_n)$ and $(d_n)$ of positive numbers such that
\begin{align*}
&V_n\subset \tilde{S}_{n+1}^{\delta_n}(-x_1,x_2)\cap \Delta_n^{\delta_n},
\end{align*}
for every $x=(x_1, x_2)\in U_n$,
where
\begin{align*}
U_n&\coloneqq (-y_n, y_n)^2,\\
V_n&\coloneqq \{\delta_n\}\times(c_n,d_n).
\end{align*}

\begin{remark}
 Notice that the picture above is in the connection with the statement.
\end{remark}

\end{proposition}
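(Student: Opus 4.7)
The plan is to translate the inclusion $V_n \subset \tilde{S}_{n+1}^{\delta_n}(-x_1, x_2)$ into a pair of quantitative inequalities in $(x_1, x_2)$ and then invoke condition \eqref{uvjet_d} of Lemma \ref{lema} to guarantee that a valid interval $(c_n, d_n)$ exists. The guiding intuition is that \eqref{uvjet_d} is precisely the ``room'' needed for the $(n{+}1)$-th cone, opened from any point in the small box $U_n$, to still cover a uniform vertical strip on the line $\{x=\delta_n\}$.

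First, I would write $\tilde{S}_{n+1}^{\delta_n}(-x_1,x_2)$ explicitly. Applying the definition with $z=(-x_1,x_2)$, a point $(\delta_n,u)$ lies in it if and only if
\begin{align*}
(\delta_n - x_1)\tan(A_n + B_{n+1}) + x_2 < u < (\delta_n - x_1)\tan(A_{n+1} + B_{n+1}) + x_2.
\end{align*}
Hence $\{\delta_n\}\times(c_n,d_n)\subset \tilde{S}_{n+1}^{\delta_n}(-x_1,x_2)$, uniformly over $(x_1,x_2)\in(-y_n,y_n)^2$, is equivalent to
\begin{align*}
(\delta_n + y_n)\tan(A_n + B_{n+1}) + y_n \;\leq\; c_n \;<\; d_n \;\leq\; (\delta_n - y_n)\tan(A_{n+1} + B_{n+1}) - y_n,
\end{align*}
obtained by maximizing the lower endpoint at $x_1=-y_n, x_2=y_n$ and minimizing the upper one at $x_1=y_n, x_2=-y_n$.

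Everything therefore reduces to making the gap between these two expressions strictly positive. Using \eqref{uvjet_d}, namely $\tan(A_{n+1}+B_{n+1})\geq\tfrac{c+2}{c-1}\tan(A_n+B_{n+1})$, the gap is bounded below by
\begin{align*}
\Big[\tfrac{c+2}{c-1}(\delta_n - y_n) - (\delta_n + y_n)\Big]\tan(A_n + B_{n+1}) - 2y_n.
\end{align*}
Since $(c+2)/(c-1)>1$, I would fix any $\delta_n\in(0,1)$ first and then choose $y_n>0$ small enough that this lower bound is strictly positive; $c_n$ and $d_n$ are then placed anywhere in the resulting open interval. The auxiliary sequence $(x_n)$ does not enter the reduced inclusion itself and can be taken to be any convenient positive sequence (it only bookkeeps the horizontal placement visible in the accompanying picture).

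What remains is the additional containment $V_n\subset\Delta_n^{\delta_n}$, where $\Delta_n^{\delta_n}$ is the ambient wedge/triangle from the picture. Since $V_n$ already lies strictly inside the cone $K_{n+1}$ as seen from every point of $-U_n$, a further shrinking of $y_n$ (and, if needed, an adjustment of $\delta_n$) places it inside $\Delta_n^{\delta_n}$. I expect the main obstacle to be precisely the uniform positivity of the tangent gap in Step~3; this is exactly what condition \eqref{uvjet_d} was engineered to provide, and the concrete constant $(c+2)/(c-1)$ makes the estimate above effective.
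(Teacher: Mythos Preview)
Your reduction of the inclusion $V_n\subset\tilde{S}_{n+1}^{\delta_n}(-x_1,x_2)$ is correct and, in fact, lands exactly on the paper's choice: the author \emph{defines} $c_n=(\delta_n+y_n)\tan(A_n+B_{n+1})+y_n$ and $d_n=(\delta_n-y_n)\tan(A_{n+1}+B_{n+1})-y_n$, precisely the extremal values you obtain. So for that half of the proposition you and the paper agree.

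The genuine gap is your handling of $\Delta_n^{\delta_n}$. This is not an unspecified ``ambient wedge from the picture''; it is the concrete segment
\[
\Delta_n^{\delta_n}=\{(\delta_n,y_2):\ (\delta_n+1)\tan(A_n+B_n)+1<y_2<(\delta_n+x_n)\tan(A_n+B_{n+1})-1\},
\]
and $x_n$ enters directly as the parameter controlling its upper edge. So your assertion that ``$(x_n)$ does not enter the reduced inclusion'' is false for the $\Delta_n^{\delta_n}$ part. More importantly, the lower edge $(\delta_n+1)\tan(A_n+B_n)+1$ must sit below $c_n\approx\delta_n\tan(A_n+B_{n+1})$; with your blanket choice $\delta_n\in(0,1)$ this fails whenever the tangents are still small (early $n$), because of the additive $+1$. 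The sentence ``since $V_n$ already lies inside the cone $K_{n+1}$, a further shrinking of $y_n$ places it inside $\Delta_n^{\delta_n}$'' is a non sequitur: $\Delta_n^{\delta_n}$ is defined via the cone $K_n$ (angles $A_n+B_n$ and $A_n+B_{n+1}$), not $K_{n+1}$, and shrinking $y_n$ does not move the fixed lower edge $(\delta_n+1)\tan(A_n+B_n)+1$.

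The paper's approach is substantially different: rather than arguing existence, it writes down \emph{explicit} formulas for $\delta_n,x_n,y_n,c_n,d_n$ (equations \eqref{def_del_n}--\eqref{def_d_n}) engineered so that $V_n=\Delta_n^{\delta_n}$ holds \emph{as an equality}, and then verifies the $\tilde{S}$ inclusion at the two corners of $U_n$ just as you do. This explicitness is not cosmetic: the very same formulas are the input to Proposition~\ref{prop5} and to the asymptotics $(\delta_n+x_n)/y_n$ bounded, $y_n\to 0$ in Theorem~\ref{har_thm_mat}. An existence argument of your type, even if completed, would leave those later quantitative steps without the needed control.
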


\begin{proof}
For $n\in\mathbb{N}$, define
\begin{align}
 I_n&\coloneqq 2+\tan(A_n+B_{n+1})+\tan(A_{n+1}+B_{n+1}),\label{def_I_n}\\
 P_n&\coloneqq \tan(A_{n+1}+B_{n+1})-\tan(A_n+B_{n+1},)\label{def_P_n}\\
 P_n'&\coloneqq \tan(A_n+B_{n+1})-\tan(A_n+B_n).\label{def_P_n'}
\end{align}
Furthermore, set
\begin{align}
\delta_n &\coloneqq \frac{I_n\cdot (1+\tan(A_n+B_n))\cdot (1+\tan(A_{n+1}+B_{n+1}))}{P_n^2\cdot(1+\tan(A_n+B_{n+1}))+P_n' I_n \cdot(1+\tan(A_{n+1}+B_{n+1}))},\label{def_del_n}\\
x_n&\coloneqq 2\delta_n\cdot\frac{1+\tan(A_n+B_{n+1})}{\tan(A_n+B_{n+1})}\cdot \frac{P_n}{I_n}+\frac{1}{\tan(A_n+B_{n+1})},\label{def_x_n}\\
y_n&\coloneqq \frac{1+\tan(A_n+B_n)-\delta_n\cdot P_n'}{1+\tan(A_n+B_{n+1})},\label{def_y_n}\\
c_n&\coloneqq (\delta_n + y_n)\cdot\tan(A_n+B_{n+1})+y_n,\label{def_c_n}\\
d_n&\coloneqq (\delta_n -y_n)\cdot \tan(A_{n+1}+B_{n+1})-y_n\label{def_d_n}.
\end{align}

For the convenience, let us denote the denominator of $\delta_n$ by $H_n$,
\begin{align}
 H_n\coloneqq P_n^2\cdot(1+\tan(A_n+B_{n+1}))+P_n' I_n \cdot(1+\tan(A_{n+1}+B_{n+1}))\label{def_H_n}.
\end{align}

First, from Lemma \ref{lema4} (Appendix \ref{Ap}), we have that $I_n$, $P_n$, $P_n'$, $\delta_n$, $x_n$, $y_n$, $c_n$ and $d_n$ are positive and $c_n<d_n$.

Let us prove $V_n\subset\Delta_n^{\delta_n}$. What is more, we show $V_n=\Delta_n^{\delta_n}$. Recall, we defined the set $\Delta_{n}^{\delta_n}$ in the way
\begin{align*}
\Delta_{n}^{\delta_n}=\{(y_1,y_2):\ &y_1=\delta_n,
(\delta_n+1)\cdot\tan(A_n+B_n)+1 <\\
& <y_2<(\delta_n+x_n)\cdot\tan(A_n+B_{n+1})-1\}.
\end{align*}

The definition \eqref{def_y_n} yields
\begin{align*}
 (\delta_n + y_n)\cdot\tan(A_n+B_{n+1})+y_n=(\delta_n+1)\cdot\tan(A_n+B_n)+1.
\end{align*}

Therefore, using \eqref{def_c_n} the equality
\begin{align*}
c_n=(\delta_n+1)\cdot\tan(A_n+B_n)+1
\end{align*}
holds.

By Lemma \ref{lema4} (Appendix \ref{Ap}) we have $d_n=(\delta_n+x_n)\cdot\tan(A_n+B_{n+1})-1$, hence $V_n=\Delta_{n}^{\delta_n}$.

Now, for $x=(x_1, x_2)\in U_n$, we prove $V_n\subset \tilde{S}_{n+1}^{\delta_n}(-x_1,x_2)$.
Because of geometrical reasoning, it is enough to show
\begin{align*}
V_n\subset \tilde{S}_{n+1}^{\delta_n}(-y_n,-y_n)\cap 
\tilde{S}_{n+1}^{\delta_n}(y_n,y_n).
\end{align*}

Let $p_1^n$ and $p_2^n$ be the lines given by
\begin{align*}
 &y-y_n=\tan(A_n+B_{n+1})\cdot(x+y_n),\\
 &y+y_n=\tan(A_{n+1}+B_{n+1})\cdot(x-y_n),
\end{align*}
respectively.
As in the picture, $p_1^n$ and $p_2^n$ are the lines through the points $(-y_n,y_n)$ and $(y_n,-y_n)$, respectively.

Denote by $P_1^{n}$ and $P_2^{n}$ the points which determine the intersection of lines $p_1^n$ and $p_2^n$ with the
set $\{(y_1,y_2): y_1=\delta_n, y_2>0\}$:
\begin{align*}
 P_1^{n}&=(x_{P_1}, y_{P_1})=(\delta_n, (\delta_n + y_n)\cdot \tan(A_n+B_{n+1})+y_n),\\
 P_2^{n}&=(x_{P_2}, y_{P_2})=(\delta_n, (\delta_n-y_n)\cdot\tan(A_{n+1}+B_{n+1})-y_n).
\end{align*}

Notice that $P_1^{n}$ and $P_2^{n}$ determine the lower boundary point and
the upper boundary point of the sets $\tilde{S}_{n+1}^{\delta_n}(y_n,y_n)$ and
$\tilde{S}_{n+1}^{\delta_n}(-y_n,-y_n)$, respectively.
Notice that showing $V_n= \{\delta_n\}\times (y_{P_1},y_{P_2})$ finishes the proof.
Clearly, the definitions \eqref{def_c_n} and \eqref{def_d_n} yield
\begin{align*}
 c_n&=y_{P_1},\\
 d_n&=y_{P_2},
\end{align*}
and hence the proposition.
\end{proof}

\includegraphics[scale=1]{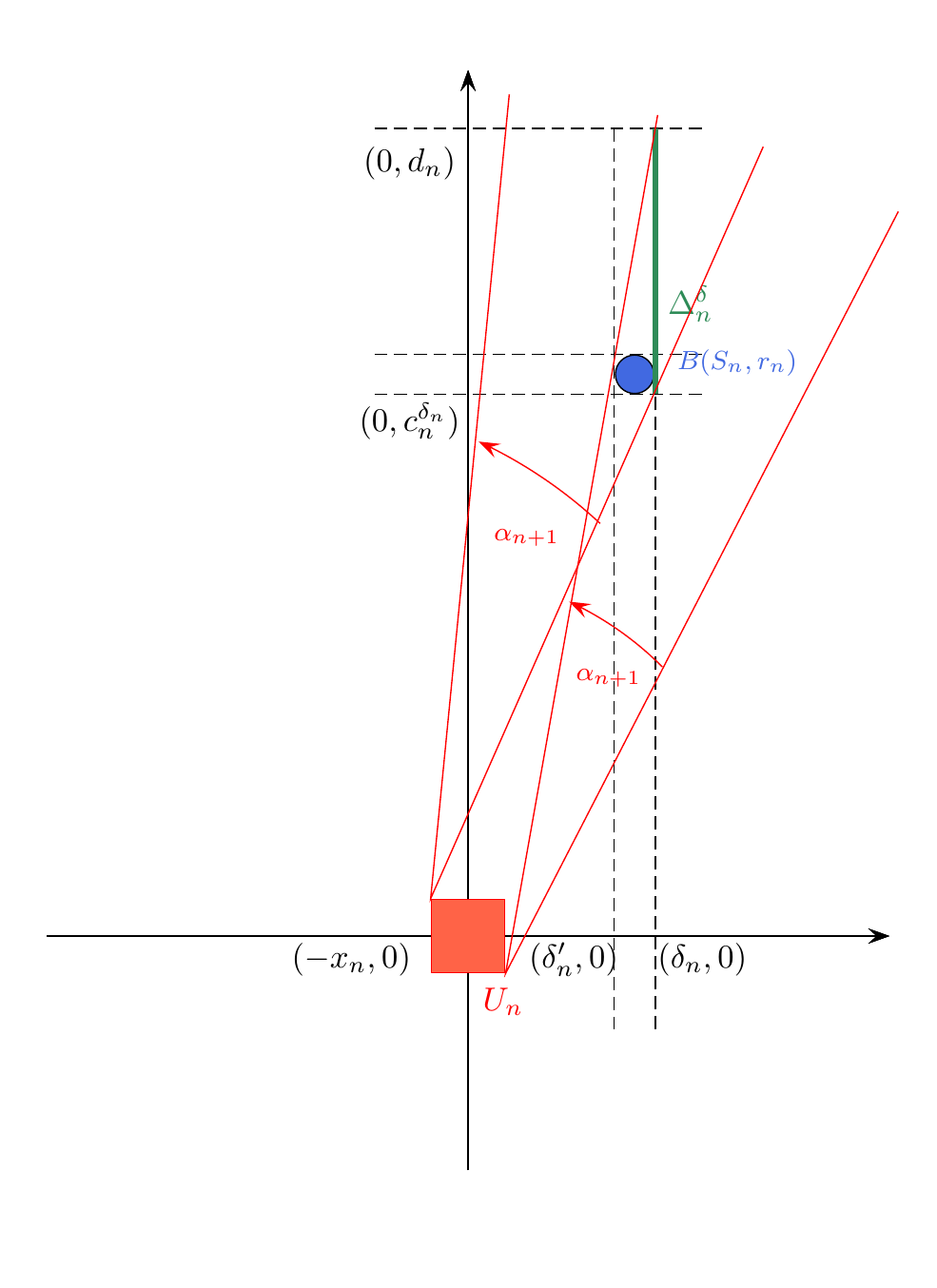}

\begin{proposition}{\label{prop5}}
Let $(\alpha_n)$, $(\beta_n)$ be sequences as in the Lemma \ref{lema} and let
$(I_n)$, $(P_n)$, $(P_n')$, $(\delta_n)$, $(x_n)$, $(y_n)$, $(c_n)$, $(d_n)$ and $(U_n)$ be as in the
Proposition \ref{prop4}.
There exist sequences $(\delta_n')$ and $(B(S_n, r_n))$, where:
\begin{enumerate}
\item $(\delta_n')$  is such that $\delta_n'<\delta_n$, for every $n\in\mathbb{N}$,
\item $B(S_n, r_n)$ is a ball with the center $S_n=(x_{S_n}, y_{S_n})$ and radius $r_n$, such that
$$
B(S_n, r_n)\subset \bigcup_{\{\varepsilon : \delta_n'<\varepsilon <\delta_n\}}
\Big(\tilde{S}_{n+1}^{\varepsilon}(-x_1,x_2)\cap \Delta_n^{\varepsilon}\Big),
$$
for every $x=(x_1, x_2)\in U_n$.
\end{enumerate}
\end{proposition}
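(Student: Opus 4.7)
The plan is to promote the one-dimensional inclusion $V_n\subset\tilde{S}_{n+1}^{\delta_n}(-x_1,x_2)\cap\Delta_n^{\delta_n}$ from Proposition~\ref{prop4} to a genuinely two-dimensional inclusion by allowing the first coordinate $\varepsilon$ to decrease slightly below $\delta_n$. At $\varepsilon=\delta_n$ the proof of Proposition~\ref{prop4} actually gave $V_n=\Delta_n^{\delta_n}$, so there is no $y_2$-slack on the $\Delta$ side and all the room for a ball must come from perturbing $\varepsilon$. This is feasible because every relevant boundary of the intersection is affine in $(\varepsilon,x_1,x_2)$ and $\bar{U}_n$ is compact, so a continuity and envelope argument will carve out a two-dimensional open region in which to inscribe $B(S_n,r_n)$.

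Concretely, from \eqref{til_L_n} and the explicit form of $\Delta_n^{\varepsilon}$ one has
\[
\tilde{S}_{n+1}^{\varepsilon}(-x_1,x_2)\cap\Delta_n^{\varepsilon}=\{\varepsilon\}\times\bigl(\max\{L_1,L_2\},\min\{U_1,U_2\}\bigr),
\]
with
\begin{align*}
L_1(\varepsilon,x)&=(\varepsilon-x_1)\tan(A_n+B_{n+1})+x_2,\\
U_1(\varepsilon,x)&=(\varepsilon-x_1)\tan(A_{n+1}+B_{n+1})+x_2,\\
L_2(\varepsilon)&=(\varepsilon+1)\tan(A_n+B_n)+1,\\
U_2(\varepsilon)&=(\varepsilon+x_n)\tan(A_n+B_{n+1})-1.
\end{align*}
The signs of the partial derivatives in $x_1$ and $x_2$ show that $\sup_{x\in\bar{U}_n}L_1$ is attained at the corner $(-y_n,y_n)$ and $\inf_{x\in\bar{U}_n}U_1$ at $(y_n,-y_n)$; inserting these corners and using $\tan(A_n+B_{n+1})>\tan(A_n+B_n)$ together with $\tan(A_{n+1}+B_{n+1})>\tan(A_n+B_{n+1})$, one checks that for every $\varepsilon\leq\delta_n$ the effective envelopes are the $x$-independent functions
\[
\ell(\varepsilon):=c_n-(\delta_n-\varepsilon)\tan(A_n+B_n),\qquad u(\varepsilon):=d_n-(\delta_n-\varepsilon)\tan(A_{n+1}+B_{n+1}).
\]

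Setting $\rho_n:=(d_n-c_n)/\bigl(\tan(A_{n+1}+B_{n+1})-\tan(A_n+B_n)\bigr)$, the open trapezoid
\[
T_n:=\{(\varepsilon,y):\ \delta_n-\rho_n<\varepsilon<\delta_n,\ \ell(\varepsilon)<y<u(\varepsilon)\}
\]
is then contained, slice by slice, in $\tilde{S}_{n+1}^{\varepsilon}(-x_1,x_2)\cap\Delta_n^{\varepsilon}$ for every $x\in U_n$. Choosing any $\delta_n'\in(\delta_n-\rho_n,\delta_n)$ and inscribing a small ball $B(S_n,r_n)$ well inside the truncated trapezoid $T_n\cap\{\varepsilon>\delta_n'\}$—for instance centred at the midpoint of $\ell$ and $u$ at $\varepsilon=(\delta_n+\delta_n')/2$, with radius smaller than both $(\delta_n-\delta_n')/2$ and half the vertical gap $u(\delta_n')-\ell(\delta_n')$—then yields the conclusion. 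The only mildly delicate step is identifying the correct corners of $\bar{U}_n$ realising $\sup L_1$ and $\inf U_1$ and determining which term dominates in the pairs $(L_1,L_2)$ and $(U_1,U_2)$ as $\varepsilon$ drops below $\delta_n$; once the envelopes $\ell,u$ are shown to be $x$-independent, the construction of the inscribed ball reduces to elementary plane geometry.
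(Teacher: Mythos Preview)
Your argument is correct and rests on the same geometric idea as the paper's: both reduce the uniformity in $x\in U_n$ to the two extremal corners of $\bar U_n$ (your $(-y_n,y_n)$ and $(y_n,-y_n)$ coincide, under the sign convention in the definition of $\tilde S_{n+1}^\varepsilon(-x_1,x_2)$, with the paper's $\tilde S_{n+1}^\varepsilon(y_n,y_n)$ and $\tilde S_{n+1}^\varepsilon(-y_n,-y_n)$), and both then inscribe a ball in the resulting two-dimensional region. The packaging differs: you identify the maximal admissible region as the trapezoid $T_n$ via the envelope functions $\ell,u$ and inscribe a ball existentially, whereas the paper writes down explicit formulas for $\delta_n'$, $S_n$, $r_n$, first places $B(S_n,r_n)$ inside the rectangle $Q_n=(\delta_n',\delta_n)\times(c_n,c_n+(\delta_n-\delta_n'))$, and then checks by direct computation (deferred to the appendix) that $Q_n$ lies in every slice. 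In fact the paper's $Q_n$ is exactly inscribed in your $T_n$, since $d_n-c_n=\delta_n P_n-y_n I_n$ and $\delta_n-\delta_n'=(\delta_n P_n-y_n I_n)/(1+\tan(A_{n+1}+B_{n+1}))$, so the top edge of $Q_n$ meets your upper envelope $u$ precisely at $\varepsilon=\delta_n'$. Your route is cleaner for the proposition as stated; the paper's explicit constants are what is reused in Theorem~\ref{har_thm_mat}, where $\delta_n'$ and $r_n$ enter the bounds $a_n,b_n$ controlling $u_n(w_0)/u_n(0)$.
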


\begin{remark}
Notions from the Proposition are illustrated at the picture above.
\end{remark}

\begin{proof}
For $n\in\mathbb{N}$, define
\begin{align}
\delta_n'&\coloneqq \frac{\delta_n\cdot(1+\tan(A_n+B_{n+1}))+y_n I_n}{1+\tan(A_{n+1}+B_{n+1})},\label{def_del_n_c}\\
J_n&\coloneqq 2\tan(A_n+B_{n+1})\cdot\tan(A_{n+1}+B_{n+1})\notag\\
    &\phantom{=\ } +\tan(A_n+B_{n+1})+\tan(A_{n+1}+B_{n+1}).\label{def_J_n}
\end{align}
Set
\begin{align}
&x_{S_n}\coloneqq \frac{\delta_n+y_n}{2}\cdot\frac{I_n}{1+\tan(A_{n+1}+B_{n+1})},\label{def_x_S}\\
&y_{S_n}\coloneqq \frac{\delta_n+y_n}{2}\cdot\frac{J_n}{1+\tan(A_{n+1}+B_{n+1})},\label{def_y_S}\\
&r_n\coloneqq \frac{1}{2}\cdot\frac{\delta_n  P_n-y_n 
I_n}{1+\tan(A_{n+1}+B_{n+1})}\label{def_r_n}.
\end{align}
By the first part of Lemma \ref{lema5} (Appendix \ref{Ap}),
\begin{align*}
B(S_n, r_n)\subset Q_n:=(\delta_n', \delta_n)\times (c_n,
c_n+(\delta_n-\delta_n')).
\end{align*}
Now, it is enough to show that for every $x=(x_1, x_2)\in U_n$
\begin{align*}
Q_n \subset \bigcup_{\{\varepsilon : \delta_n'<\varepsilon <\delta_n\}}\Big( \tilde{S}_{n+1}^{\varepsilon}(-x_1,x_2)\cap \Delta_n^{\varepsilon}\Big).
\end{align*}

Let $z=(z_1,z_2)\in Q_n$. We show
\begin{align*}
z \in \tilde{S}_{n+1}^{\varepsilon}(-x_1,x_2)\cap \Delta_n^{\varepsilon},
\end{align*}
for some $\delta_n'<\varepsilon<\delta_n$.

Recall,
\begin{align*}
\Delta_{n}^{\varepsilon}=\{(y_1,y_2):\ &y_1=\varepsilon,
(\varepsilon+1)\cdot\tan(A_n+B_n)+1<\\
&<y_2<(\varepsilon+x_n)\cdot\tan(A_n+B_{n+1})-1\},
\end{align*}
so in order to show $z\in \Delta_{n}^{\varepsilon}$, we prove that
\begin{align*}
c_n&\geq(\varepsilon+1)\cdot\tan(A_n+B_n)+1,\\
c_n+(\delta_n-\delta_n')&\leq(\varepsilon+x_n)\cdot\tan(A_n+B_{n+1})-1,
\end{align*}
for some $\delta_n'<\varepsilon<\delta_n$. Since the proof is quite technical,
we spell out the details in the second part of Lemma \ref{lema5} (Appendix \ref{Ap}).

Now, for $x=(x_1, x_2)\in U_n$, let us prove $z \in \tilde{S}_{n+1}^{\varepsilon}(-x_1,x_2)$.
Due to geometrical reasoning, it is enought to show that
\begin{align*}
z\in \tilde{S}_{n+1}^{\varepsilon}(-y_n,-y_n)\cap \tilde{S}_{n+1}^{\varepsilon}(y_n,y_n).
\end{align*}

As before, let $p_1^n$ and $p_2^n$ denote the lines
\begin{align*}
 &y-y_n=\tan(A_n+B_{n+1})\cdot(x+y_n),\\
 &y+y_n=\tan(A_{n+1}+B_{n+1})\cdot(x-y_n),
\end{align*}
respectively.
As before, the lines $p_1^n$ and $p_2^n$ are the lines through the points $(-y_n,y_n)$ and $(y_n,-y_n)$, respectively.

Denote by $P_1^{n}$ and $P_2^{n}$ the points which determine the intersection of $p_1^n$ and $p_2^n$ with the set $\{(y_1,y_2): y_1=\varepsilon, y_2>0\}$:
\begin{align}
 P_1^{n}&=(x_{P_1}, y_{P_1})=(\varepsilon, (\varepsilon + y_n)\cdot \tan(A_n+B_{n+1})+y_n),\label{y_P_1}\\
 P_2^{n}&=(x_{P_2}, y_{P_2})=(\varepsilon,(\varepsilon-y_n)\cdot\tan(A_{n+1}+B_{n+1})-y_n)\label{y_P_2}.
\end{align}

Notice that $P_1^{n}$ and $P_2^{n}$ determine the lower boundary point and
the upper boundary point of sets $\tilde{S}_{n+1}^{\varepsilon}(y_n,y_n)$ and
$\tilde{S}_{n+1}^{\varepsilon}(-y_n,-y_n)$, respectively.
If we show $z_2\in  (y_{P_1},y_{P_2})$, the proposition is proved.

To this end, we prove
\begin{align*}
c_n &\geq y_{P_1},\\
c_n + (\delta_n-\delta_n')&\leq y_{P_2},
\end{align*}
and the details can be found in the Lemma \ref{har1} in Appendix \ref{Ap}.
\end{proof}

\subsection{Harnack Inequality}


In the subsection that follows, we give a proof of the fact that Harnack inequality does not hold. 

\includegraphics[scale=0.8]{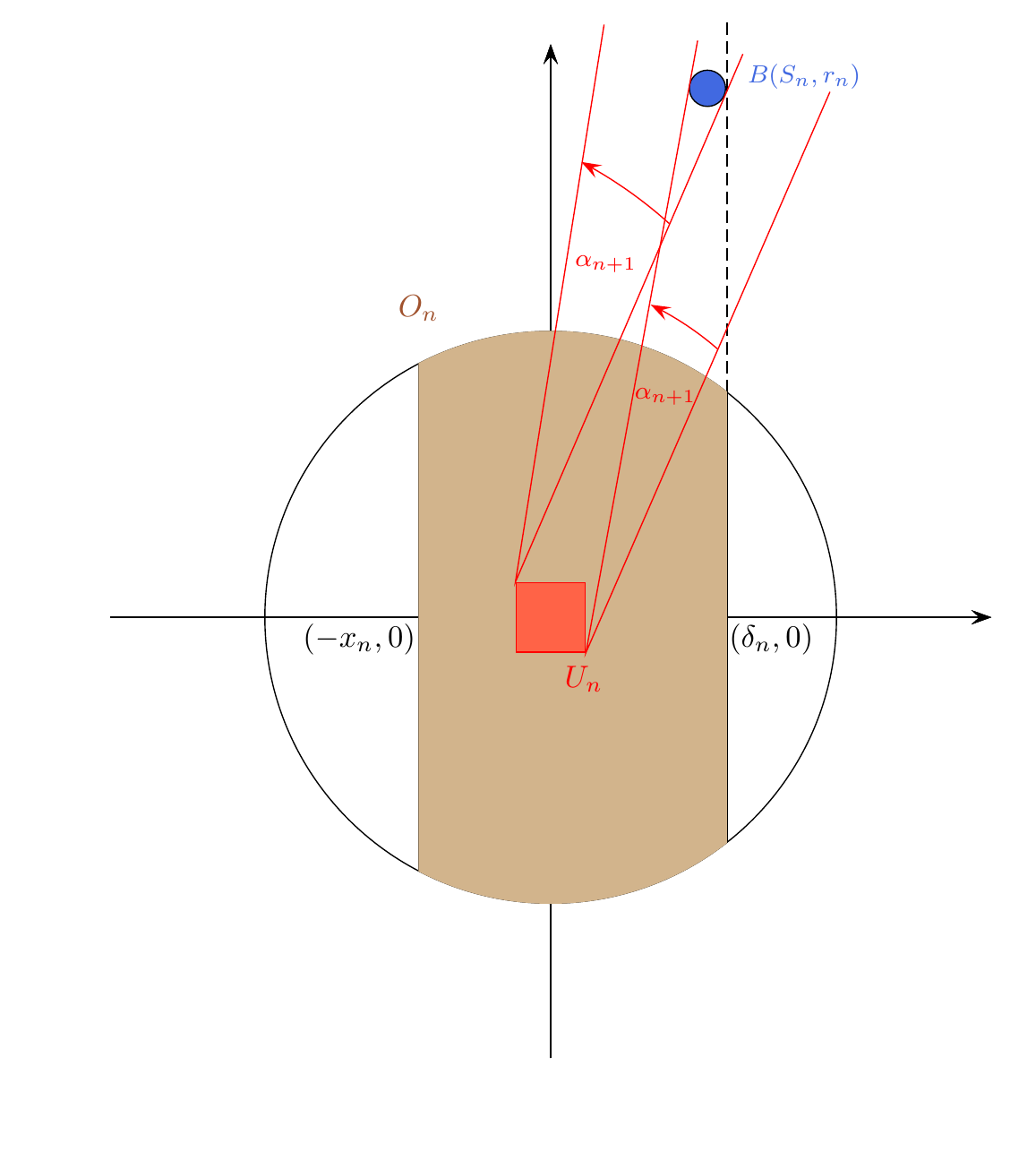}

Let $X$ be a symmetric $\alpha$-stable L\'evy process in $\mathbb{R}^2$, $0<\alpha<1$,
with the characteristic function of the form
\begin{align*}
\mathbb{E}^0 e^ {iu\cdot X_t} =  e^{-t \Phi(u)}, \ u \in\mathbb{R}^2,
\end{align*}
where
\begin{align*}
\Phi(u)=\int_{\mathbb{S}^{1}} |u\cdot \xi|^\alpha \mu(\mathrm{d}\xi).
\end{align*}
We define the spectral measure $\mu$  appropriately, below in Theorem \ref{har_thm_mat}.
Furthermore, for a given starting point $x\in B_1$ of the process $X$, we examine the probability
\begin{align*}
\mathbb{P}^{x}(X_{\tau_{B_1}} \in B_n),\ n\in \mathbb{N}
\end{align*}
of hitting the set $B_n=B(S_n,r_n)$ at the first exit time of a unit ball $B_1$, where for every $n\in \mathbb{N}$, $B_n$ are as in Proposition \ref{prop5}. We choose the points $w_0=(1/2,0)$, $0=(0,0)$ and
define the harmonic functions
\begin{align*}
 u_n(x)= \mathbb{P}^{x}(X_{\tau_{B_1}} \in B_n),\ n\in\mathbb{N},\ x\in B_1.
\end{align*}
We show
\begin{align*}
 \frac{u_n(w_0)}{u_n(0)}\leq c(\alpha)\cdot
a_n b_n ,
\end{align*}
for sequences $(a_n)$ and $(b_n)$ with certain properties and eventually obtain the desired conclusion.

\begin{remark}
Constants are positive real numbers, which exact value may
vary from one line to the other. For the convenience they will not be explicitly
stated throughout the proof.
\end{remark}

Before we prove the Theorem \ref{har_thm_mat}, we state the proposition for the Green function estimate of the unit ball $B_1$ for $X$ (see \cite{bg-sz-1}, Theorem 2).

\begin{proposition}\label{Gr_f_est}
Let $X$ be a symmetric $\alpha$-stable L\'evy process in $\mathbb{R}^d$, $0<\alpha<2$, with the characteristic function of the form
\begin{align*}
\mathbb{E}^0 e^ {iu\cdot X_t} =  e^{-t \Phi(u)}, \ u \in\mathbb{R}^d,
\end{align*}
where
\begin{align*}
\Phi(u)=\int_{\mathbb{S}^{d-1}} |u\cdot \xi|^\alpha \mu(\mathrm{d}\xi),
\end{align*}
and $\mu$ is a finite, symmetric measure on $\mathbb{S}^{d-1}$ such that $\mu(S^{d-1})>0$. Let $\mu$ be absolutely continuous with respect to the surface measure on $\mathbb{S}^{d-1}$, and denote by $f_\mu$ its density, for which the inequality
\begin{align*}
 0\leq f_\mu(\xi)\leq m,\ \xi\in\mathbb{S}^{d-1},
\end{align*}
holds, for some $m>0$.
Then the estimate for the Green function $G_{B_1}$
\begin{align*}
c^{-1}\cdot s(y)\cdot|x-y|^{\alpha-d} \leq G_{B_1}(x,y)\leq c\cdot s(y)\cdot|x-y|^{\alpha-d},
\end{align*}
holds, for all $|x|<1/2, |y|<1$.
Here, $s(y)=\mathbb{E}^y [\tau_{B_1}]$ is the expected time spent in $B_1$, if the process starts from $y\in B_1$.
\end{proposition}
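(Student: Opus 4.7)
The plan is to combine a global Green function estimate with boundary behaviour control, via the identity
\begin{align*}
G_{B_1}(x,y) = G(x,y) - \mathbb{E}^x[G(X_{\tau_{B_1}}, y)].
\end{align*}
First I would establish the two-sided whole-space bound $G(x,y) \asymp |x-y|^{\alpha - d}$. The hypothesis $f_\mu \leq m$ yields $\Phi(u) \leq c_1 |u|^\alpha$, and a matching lower bound on $\Phi$ (forced by the non-degeneracy needed for $X$ to possess a transition density) gives $\Phi(u) \asymp |u|^\alpha$. The $\alpha$-stable scaling $p(t,x) = t^{-d/\alpha}\, p(1, t^{-1/\alpha} x)$ combined with the pointwise bound $p(1,x) \asymp 1 \wedge |x|^{-d-\alpha}$ integrates in $t$ to give the claim.

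Second, for the upper bound on $G_{B_1}$, the trivial inequality $G_{B_1}(x,y) \leq G(x,y)$ already produces the factor $|x-y|^{\alpha-d}$; the additional factor $s(y)$ must come from the boundary behaviour of $G_{B_1}(x,\cdot)$. Here I would invoke the boundary Harnack principle for $X$. For fixed $x$ with $|x| \leq 1/2$, the functions $y \mapsto G_{B_1}(x,y)$ and $y \mapsto s(y)$ are both $X$-harmonic on $B_1 \cap \{|y| > 3/4\}$ and vanish on $B_1^c$, so boundary Harnack yields
\begin{align*}
\frac{G_{B_1}(x,y)}{s(y)} \asymp \frac{G_{B_1}(x,y_0)}{s(y_0)}
\end{align*}
for a reference point $y_0$ well inside $B_1$, and the right-hand quotient is in turn comparable to $|x-y|^{\alpha-d}$ by interior Green function estimates.

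For the lower bound I would use the Ikeda--Watanabe formula
\begin{align*}
\mathbb{E}^x[G(X_{\tau_{B_1}}, y)] = \int_{B_1} G_{B_1}(x,z) \int_{B_1^c} \nu(w-z)\, G(w,y)\, dw\, dz,
\end{align*}
bound the inner integral against $G(x,y)$ using the global estimate, and compare the resulting difference to $s(y)\,|x-y|^{\alpha-d}$ via the representation $s(y) = \int_{B_1} G_{B_1}(y,z)\,dz$. The main obstacle is establishing the boundary Harnack principle uniformly in the spectral measure $\mu$: unlike the isotropic case, the Lévy density is anisotropic and the required exit-distribution estimates must remain stable as $\mu$ concentrates. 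This is precisely the technical heart of \cite{bg-sz-1}, which is why the proposition is cited rather than reproved here.
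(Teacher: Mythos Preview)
The paper does not prove this proposition at all: it is simply quoted from \cite{bg-sz-1}, Theorem~2, as you yourself observe in your final sentence. So there is no ``paper's proof'' to compare against beyond the citation.

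That said, your sketch contains a real circularity. You propose to extract the factor $s(y)$ in the upper bound by invoking the boundary Harnack principle for $X$. But the Green function estimate you are trying to prove is used in this very paper (Theorem~\ref{har_thm_mat}) to show that the \emph{interior} Harnack inequality fails for $X$ when $\alpha\in(0,1)$. The boundary Harnack principle is at least as delicate as interior Harnack and, in the non-isotropic stable setting, is not available as a black box independent of the Green function bounds you want; assuming it here would beg the question. The actual argument in \cite{bg-sz-1} avoids any Harnack-type input: it proceeds through the sharp two-sided potential-kernel bound $G(x,y)\asymp|x-y|^{\alpha-d}$ (which does follow from $f_\mu\le m$ and non-degeneracy, as you indicate), direct estimates on $\mathbb{E}^x[G(X_{\tau_{B_1}},y)]$ using the Ikeda--Watanabe formula and the upper bound on the L\'evy density, and the identification $s(y)=\int_{B_1}G_{B_1}(y,z)\,\mathrm{d}z$. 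The $s(y)$ factor emerges from this computation rather than from a comparison of harmonic functions at the boundary.

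Your first step (the free Green function bound) and the Ikeda--Watanabe representation are the right ingredients; the gap is only in how the boundary factor is obtained.
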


\begin{theorem}\label{har_thm_mat}
Let $X$ be a symmetric $\alpha$-stable L\'evy process in $\mathbb{R}^2$,\\
$0<\alpha<1$,
with the characteristic function of the form
\begin{align*}
\mathbb{E}^0 e^ {iu\cdot X_t} =  e^{-t \Phi(u)}, \ u \in\mathbb{R}^2,
\end{align*}
where
\begin{align*}
\Phi(u)=\int_{\mathbb{S}^{1}} |u\cdot \xi|^\alpha \mu(\mathrm{d}\xi).
\end{align*}
Let the 
measure $\mu$ on $\mathbb{S}^{1}$ be absolutely continuous with respect to the surface measure on $\mathbb{S}^{1}$, and denote by $f_\mu$ its density.
Set
\begin{align*}
 f_\mu&=\mathbf{1}_{\bigcup_{n\geq 1} (B_{\xi_n,r_n} \cup \ B_{-\xi_n,r_n})},
\end{align*}
where
\begin{align*}
  &B_{\xi_n,r_n}=B(\xi_n,r_n)\cap \mathbb{S}^1,\\
  &r_n=2\cdot\sin(\alpha_n/4),\\
  &\xi_n=\big(\cos(A_{n-1}+B_n+\alpha_n/2\big), \sin\big(A_{n-1}+B_n+\alpha_n/2)\big) \in \mathbb{S}^1, n\in \mathbb{N},
\end{align*}
where $(\alpha_n)$ and $(\beta_n)$ are as in Lemma \ref{lema}.
Then the Harnack inequality for $X$ does not hold.
\end{theorem}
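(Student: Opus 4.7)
The plan is to exhibit a sequence of non-negative harmonic functions on $B_1$ whose values at $w_0 = (1/2,0)$ and $0 = (0,0)$ have ratio tending to zero. Define
\[
u_n(x) \coloneqq \mathbb{P}^x\bigl(X_{\tau_{B_1}} \in B(S_n, r_n)\bigr), \qquad x \in B_1,\ n \in \mathbb{N},
\]
with $B(S_n, r_n)$ as in Proposition \ref{prop5}. The asymptotics of $\delta_n, y_n, y_{S_n}$ from Proposition \ref{prop4} (together with $y_{S_n} \to \infty$, which is forced by condition \eqref{uvjet} of Lemma \ref{lema}) yield $B(S_n, r_n)\cap \bar{B}_1 = \emptyset$ for all $n$ large enough, so each $u_n$ is non-negative and harmonic in $B_1$. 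Proving $u_n(w_0)/u_n(0) \to 0$ then contradicts the Harnack inequality since $w_0, 0 \in B_{1/2}$.

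The central tool is the Ikeda-Watanabe formula, which after swapping the order of integration reads
\[
u_n(x) = \int_{B_1} G_{B_1}(x, y) \int_{B(S_n, r_n)} J(z - y)\, dz\, dy,
\]
where $J(h) = c_\alpha |h|^{-2-\alpha} f_\mu(h/|h|)$ is the density of the L\'evy measure of $X$. Combined with the estimate $G_{B_1}(x, y) \asymp s(y) |x-y|^{\alpha-2}$ from Proposition \ref{Gr_f_est}, the problem reduces to an explicit integral inequality. The geometric input of Proposition \ref{prop5} is that for every $y \in U_n = (-y_n, y_n)^2$ the ball $B(S_n, r_n)$ is seen from $y$ through (an inner portion of) the cone $K_{n+1}$; since $\xi_{n+1}$ is the midpoint of $K_{n+1}$ and $f_\mu \equiv 1$ on $B_{\xi_{n+1}, r_{n+1}} \cup B_{-\xi_{n+1}, r_{n+1}}$ by construction, $f_\mu((z-y)/|z-y|) = 1$ for all such $z$, and therefore
\[
\int_{B(S_n, r_n)} J(z - y)\, dz \asymp r_n^{\,2}\, |S_n|^{-2-\alpha}, \qquad y \in U_n.
\]

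For the lower bound on $u_n(0)$ I would restrict the $y$-integration to $U_n \subset B_{1/2}$ (for $n$ large), where $s(y) \geq c > 0$ by $\alpha$-stable scaling and $|S_n - y| \asymp |S_n|$. The radial estimate $\int_{U_n} |y|^{\alpha-2}\, dy \asymp y_n^{\,\alpha}$ then gives
\[
u_n(0) \geq c\, r_n^{\,2}\, |S_n|^{-2-\alpha}\, y_n^{\,\alpha}.
\]
For the upper bound on $u_n(w_0)$ I split the $y$-integration into $U_n$ and $B_1 \setminus U_n$. On $U_n$ the factor $|w_0 - y|^{\alpha-2}$ is bounded by $(1/4)^{\alpha-2}$ for $n$ large, producing a term of order $r_n^{\,2}\, |S_n|^{-2-\alpha}\, y_n^{\,2}$; its ratio with the lower bound is $O(y_n^{\,2-\alpha})$, which tends to $0$ because $y_n \to 0$ by condition \eqref{uvjet}.

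The main obstacle is the stray contribution from $y \in B_1\setminus U_n$: for such $y$ the jump direction $(z-y)/|z-y|$ must still hit one of the arcs $\pm B_{\xi_k, r_k}$, i.e.\ $B(S_n, r_n)$ must be visible from $y$ through some cone $K_k$ with $k \neq n+1$. Geometrically the set of admissible $y$ for each such $k$ is a thin slab of width $O(r_n)$ about a ray through $S_n$; the Green factor $|w_0 - y|^{\alpha-2}$ remains locally integrable along it because $\alpha < 1$ in two dimensions, and the balance condition \eqref{uvjet_d} of Lemma \ref{lema} keeps each $k$-contribution strictly below the $U_n$-term, with the sum over $k$ still negligible. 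Combining everything yields a bound of the announced form $u_n(w_0)/u_n(0) \leq c(\alpha)\, a_n b_n$ with $a_n b_n \to 0$, which refutes the Harnack inequality. The technical heart of the argument is thus the case-analysis of the off-$U_n$ contributions; the rest is a direct combination of Ikeda-Watanabe with Propositions \ref{Gr_f_est}, \ref{prop4}, and \ref{prop5}.
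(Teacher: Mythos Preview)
Your overall strategy matches the paper's: define $u_n(x)=\mathbb{P}^x(X_{\tau_{B_1}}\in B(S_n,r_n))$, bound $u_n(0)$ below by restricting to jumps out of $U_n$, bound $u_n(w_0)$ above, and show the ratio tends to zero. The lower bound you sketch is essentially the one in the paper.

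The upper bound, however, has a real gap. You assert that for $y\in B_1\setminus U_n$ any jump into $B(S_n,r_n)$ must pass through a cone $K_k$ with $k\neq n+1$. This is not what Proposition~\ref{prop5} says: it guarantees that $B(S_n,r_n)$ is visible through $K_{n+1}$ from every point of $U_n$, but it does \emph{not} confine $K_{n+1}$-visibility to $U_n$. In fact the set of $y\in B_1$ from which $B(S_n,r_n)$ is reachable by a single jump is essentially the strip $O_n=((-x_n,\delta_n)\times(-1,1))\cap B_1$, and since $w_0\notin O_n$ for large $n$ the Green factor is bounded there, producing a contribution of order $r_n^{\,2}|S_n|^{-2-\alpha}(\delta_n+x_n)$, which is comparable to $r_n^{\,2}|S_n|^{-2-\alpha}\,y_n$, not to your claimed $r_n^{\,2}|S_n|^{-2-\alpha}\,y_n^{\,2}$. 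This is the dominant term, not a stray correction. A clear symptom that your bound is too strong: it would give $u_n(w_0)/u_n(0)\leq c\,y_n^{\,2-\alpha}\to 0$ for \emph{all} $\alpha\in(0,2)$, contradicting the fact (noted in the Remarks after the theorem) that the Harnack inequality holds for this process when $1<\alpha<2$.

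The paper sidesteps the cone-by-cone analysis entirely. The role of the $\Delta_n^\varepsilon$ clause in Propositions~\ref{prop4}--\ref{prop5}, which you did not exploit, is precisely to force every jump from $B_1$ into $B(S_n,r_n)$ to originate in $O_n$: points with first coordinate $\leq -x_n$ see $B(S_n,r_n)$ in the angular gap between $K_n$ and $K_{n+1}$, and points with first coordinate $\geq \delta_n$ see it in the second quadrant, where $\mu$ has no mass. One then crudely bounds $f_\nu\leq |\cdot|^{-2-\alpha}$ and estimates the occupation time of $O_n$ from $w_0$ by $c(\delta_n+x_n)$. The resulting ratio is controlled by $(\delta_n+x_n)/y_n^{\,\alpha}$, of order $y_n^{\,1-\alpha}$, which tends to zero exactly when $\alpha<1$; this is where the restriction on $\alpha$ actually enters.
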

\begin{proof}
Let
\begin{align*}
u_n(x)&= \mathbb{P}^{x}(X_{\tau_B} \in B_n),\ n\in \mathbb{N},\ x\in B(0,1),\\
O_n&=\big((-x_n,\delta_n)\times(-1,1)\big)\cap B_1,
\end{align*}
where $B_n=B(S_n,r_n)$, $x_n$, $\delta_n$ are introduced in the Propositions \ref{prop4} and \ref{prop5}, and set $w_0=(1/2,0)$ and $0=(0,0)$.

By the strong Markov property, every function $u_n$ is harmonic in $B_1$ with respect to $X$.

Namely, for a set $U\subset \bar{U} \subset B_1$,
\begin{align*}
\mathbb{E}^x[u_n(X_{\tau_U})]&=\mathbb{E}^x[\mathbb{P}^{X_{\tau_U}}[X_{\tau_B}\in B_n]]\\
&= \mathbb{P}^x[X_{\tau_B}\circ \theta_{\tau_U}\in B_n]\\
&=\mathbb{P}^x[X_{\tau_B}\in B_n]\\
&=u_n(x).
\end{align*}

Using the L\'evy system formula (\cite{chen})
it follows
\begin{align*}
&\mathbb{E}^{w_0}\Bigg[\sum_{s\leq t\wedge \tau_B}\mathbf{1}_{\{X_{s-}\in O_n, X_{s} \in B_n\}}\Bigg]\\
&=\mathbb{E}^{w_0}\Bigg[\int_{0}^{t\wedge \tau_B}\mathbf{1}_{O_n}(X_s)\Big(\int_{B_n} f_{\nu}(y-X_s)\ \mathrm{d}y\Big)\ \mathrm{d}s\Bigg] \\
&\leq\mathbb{E}^{w_0}\Bigg[\int_{0}^{t\wedge \tau_B}\mathbf{1}_{O_n}(X_s)\Big(\int_{B_n} |y-X_s|^{-\alpha-2}\ \mathrm{d}y\Big)\ \mathrm{d}s\Bigg]\\
&\leq |B_n|\cdot[(\delta_n+1)\cdot\tan(A_n+B_n)]^{-\alpha-2}\cdot\mathbb{E}^{w_0}\Bigg[\int_0^{t\wedge\tau_B} \mathbf{1}_{O_n}(X_s)\ \mathrm{d}s\Bigg]\\
&\leq |B_n|\cdot [(\delta_n+1)\cdot\tan(A_n+B_n)]^{-\alpha-2}\cdot \mathbb{E}^{w_0}\Bigg[\int_0^{\infty} \mathbf{1}_{O_n}(X_s)\ \mathrm{d}s\Bigg].
\end{align*}

The estimates of the potential kernel (\cite{sz-1}) imply
\begin{align*}
\mathbb{E}^{w_0}\Bigg[\int_0^{\infty} \mathbf{1}_{O_n}(X_s)\ \mathrm{d}s\Bigg]&\leq c(\alpha)\cdot \int_ {O_n} |y-w_0|^{\alpha-2}\ \mathrm{d}y\\
&\leq c(\alpha)\cdot(\delta_n+ x_n).
\end{align*}
From here we have
\begin{align*}
&\mathbb{E}^{w_0}\Bigg[\sum_{s\leq t\wedge \tau_B}\mathbf{1}_{\{X_{s-}\in O_n, X_{s} \in B_n\}}\Bigg]\\
&\leq c(\alpha)\cdot|B_n|\cdot [(\delta_n+1)\cdot\tan(A_n+B_n)]^{-\alpha-2}\cdot (\delta_n+x_n).
\end{align*}

Letting $t\rightarrow\infty$, by the dominated convergence theorem, we obtain
\begin{align}\label{gornja_og_alt}
u_n(w_0)
\leq
c_1(\alpha,n)\cdot(\delta_n+ x_n),
\end{align}
where
\begin{align*}
c_1(\alpha,n)=c(\alpha)\cdot|B_n|\cdot [(\delta_n+1)\cdot\tan(A_n+B_n)]^{-\alpha-2}.
\end{align*}

Let us compute the lower bound on $u_n(0)$.

By the L\'evy system formula (see \cite{chen})
and the construction, it follows that
\begin{align*}
u_n(0)&=\mathbb{P}^{0}(X_{\tau_B} \in B_n) \\
&\geq \mathbb{E}^{0}\Bigg[\sum_{s\leq t\wedge \tau_B}\mathbf{1}_{\{X_{s-}\in U_n, X_{s} \in B_n\}}\Bigg]\\
&=\mathbb{E}^{0}\Bigg[\int_{0}^{t\wedge\tau_B}\mathbf{1}_{U_n}(X_s)\Big(\int_{B_n} f_{\nu}(y-X_s)\ \mathrm{d}y\Big)\ \mathrm{d}s\Bigg]\\
&=\mathbb{E}^{0}\Bigg[\int_{0}^{t\wedge\tau_B}\mathbf{1}_{U_n}(X_s)\Big(\int_{B_n} |y-X_s|^{-\alpha-2}\ \mathrm{d}y\Big)\ \mathrm{d}s\Bigg]\\
&\geq \bar{c}_2(\alpha,n)\cdot\mathbb{E}^{0}\Bigg[\int_{0}^{t\wedge\tau_B}\mathbf{1}_{U_n}(X_s)\ \mathrm{d}s\Bigg],
\end{align*}
where
\begin{align*}
 \bar{c}_2(\alpha,n)=|B_n|\cdot[2\delta_n-\delta_n'+2y_n+ 1 +(\delta_n+1)\cdot\tan(A_n+B_n)]^{-\alpha-2}.
\end{align*}

Letting $t\rightarrow\infty$, by the dominated convergence theorem, it follows
\begin{align*}
u_n(0)\geq \bar{c}_2(\alpha,n)\cdot\mathbb{E}^{0}\Bigg[\int_{0}^{\tau_B}\mathbf{1}_{U_n}(X_s)\ \mathrm{d}s\Bigg].
\end{align*}

Using the the estimate of the Green function from \cite{bg-sz-1}
there is $c(\alpha)$ so that
\begin{align*}
\mathbb{E}^{0}\Bigg[\int_{0}^{\tau_B}\mathbf{1}_{U_n}(X_s)\ \mathrm{d}s\Bigg]&=\mathbb{E}^{0}\Bigg[\int_{0}^{\infty}\mathbf{1}_{\{X_s\in U_n,\tau_B>s\}}\ \mathrm{d}s\Bigg]\\
&=\int_{0}^{\infty}\mathbb{P}^0(X_s\in U_n,\tau_B>s)\ \mathrm{d}s\\
&=\int_{U_n}G_B(0,y)\ \mathrm{d}y \\
&\geq \int_{U_n}s(y)|y|^{\alpha-2}\ \mathrm{d}y\\
&\geq c(\alpha)\cdot (1-2y_n)^\alpha \int_{U_n}|y|^{\alpha-2}\ \mathrm{d}y \\
&\geq c(\alpha)\cdot (1-2y_n)^\alpha \int_{B(0,y_n/2)}|y|^{\alpha-2}\ \mathrm{d}y \\
&\geq c(\alpha)\cdot \big(1-2 y_n\big)^{\alpha}\big(y_n\big)^{\alpha}.
\end{align*}

From here we obtain the lower bound
\begin{align}\label{donja_alt}
u_n(0)\geq c_2(\alpha,n)\cdot \big(1-2 y_n\big)^{\alpha}\big(y_n\big)^{\alpha},
\end{align}
where
\begin{align*}
&c_2(\alpha,n)=c(\alpha)|B_n|\cdot
[2\delta_n-\delta_n'+2y_n+ 1 +(\delta_n+1)\cdot\tan(A_n+B_n)]^{-\alpha-2}.
\end{align*}

Combining inequalities \eqref{gornja_og_alt} and \eqref{donja_alt}, we obtain:

\begin{align}\label{nejedn_alt}
\frac{u_n(w_0)}{u_n(0)}\leq c(\alpha)\cdot a_n b_n,
\end{align}
where
\begin{align*}
a_n&=\Bigg(\frac{2\delta_n-\delta_n'+2y_n+1}{(\delta_n+1)\cdot\tan(A_n+B_n)}+1\Bigg)^{\alpha+2},\\
b_n&=\frac{\delta_n+x_n}{y_n^\alpha\cdot (1-2 y_n)^\alpha}.
\end{align*}
Due to the construction,
\begin{align}\label{limes_1_alt}
\lim_{n\rightarrow \infty} a_n=1.
\end{align}

Using $\eqref{uvjet}$ and $\eqref{uvjet_d}$, we obtain
\begin{align}\label{limsup_alt}
\limsup_{n\rightarrow \infty} \frac{\delta_n+x_n}{y_n}<\infty.
\end{align}
Since the proof of \eqref{limsup_alt} is similar to the proof of Lemma \ref{har} (Appendix), we skip it.

Since $0<\alpha<1$,
\begin{align*}
b_n&=\frac{\delta_n+x_n}{y_n^\alpha\cdot (1-2 y_n)^\alpha}\\
&=\frac{1}{(1-2 y_n)^\alpha}\cdot \frac{\delta_n+x_n}{y_n}\cdot y_n^{1-\alpha}.
\end{align*}

Therefore,
\begin{align}\label{limes_2_alt}
\lim_{n\rightarrow \infty} b_n=0.
\end{align}

To conclude, $\eqref{nejedn_alt}$, $\eqref{limes_1_alt}$ and $\eqref{limes_2_alt}$ imply
\begin{align*}
\frac{u_n(0)}{u_n(w_0)}
\end{align*}
can be made as large as we like by taking $n$ large enough. This implies that
the Harnack inequality for $X$ is not possible.
\end{proof}

\begin{remark}
 Notice that for $1<\alpha<2$, we have
 \begin{align*}
  \frac{u_n(0)}{u_n(w_0)}\geq c(\alpha) \cdot \frac{1}{a_n} \cdot \frac{1}{b_n},
 \end{align*}
where
\begin{align*}
 \frac{1}{b_n}=(1-2y_n)^\alpha \cdot \frac{y_n}{\delta_n+x_n} \cdot y_n^{\alpha-1}.
\end{align*}
Notice that from here, we have
\begin{align*}
\lim_{n\rightarrow \infty} \frac{1}{b_n}=0,
\end{align*}
therefore the proof breaks down.
\end{remark} 

\begin{remark}
In connection with the article \cite{bg-sz-1}, there it was shown that as in our case ($d=2$), for $1<\alpha<2$ Harnack inequality holds (see Corollary 13 of the aforementioned article).
\end{remark}


\hfill \break

\section{Weak Harnack Inequality}

\begin{definition}
\emph{The weak Harnack inequality} for a symmetric $\alpha$-stable L\'evy processes $X$ holds if there is a constant $C\geq 1$ such that for every non-negative function $u\colon \mathbb{R}^d\rightarrow \mathbb{R}$, which
is harmonic in $B_1$ with respect to $X$,
the inequality
\begin{align*}
 \|u\|_{L^{1}(B_{1/2})} \leq C\cdot \inf_{B_{1/4}} u
\end{align*}
holds.
\end{definition}

We continue with the definitions regarding the spherical part of the L\'{e}vy measure (see \cite{bg-sz-2} (cf. \cite{priola})).
\begin{definition}
A measure $\lambda$ on $\mathbb{R}^d$
is called \emph{degenerate} if there is a proper linear subspace $M$ of $\mathbb{R}^d$ such that $Spt(\lambda)\subset M$, where $Spt(\lambda)$
denotes the support of the measure $\mu$.

A measure $\lambda$ is called \emph{non-degenerate} if it is not degenerate. 
\end{definition}

\begin{definition}\label{def_ned}
 A measure $\mu$ on $\mathbb{S}^{d-1}$ is called a \emph{spectral measure} if it is positive, finite, non-degenerate and symmetric.
\end{definition}

For the equivalence of the non-degeneracy of the measure $\mu$ and the condition
\begin{align}\label{nondeg}
\Phi(u)\geq c\cdot |u|^\alpha, \ u\in \mathbb{R}^d,
\end{align}
where $c=c(\alpha)$ is a positive constant and $\Phi$ is as in \eqref{k_e}, see \cite{priola}.

\begin{theorem}[Weak Harnack Inequality]{\label{Weak Harnack}}
Let $X$ be a symmetric $\alpha$-stable L\'evy process in $\mathbb{R}^d$, $d\geq 2$, with index of stability $\alpha \in (0,2)$ and the characteristic function of the form
\begin{align*}
\mathbb{E}^0 e^ {iu\cdot X_t} =  e^{-t \Phi(u)}, \ \ u \in
\mathbb{R}^d,\ t\geq 0,
\end{align*}
where the characteristic exponent is given by
\begin{align*}
\Phi(u)=\int_{\mathbb{S}^{d-1}} |u\cdot \xi|^\alpha \mu(\mathrm{d}\xi),
\end{align*}
and $\mu$ is a spectral measure.
Furthermore, let $\mu$ be absolutely continuous with respect to the uniform measure $\sigma$ on the sphere $\mathbb{S}^{d-1}$ and denote by $f_\mu$ its density.
Assume
that there is a positive constant $m$ such that
\begin{align*}
0\leq f_{\mu}(\xi)\leq m,\  \xi \in \mathbb{S}^{d-1}.
\end{align*}
Then the weak Harnack inequality for $X$ holds.
\end{theorem}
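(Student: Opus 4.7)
The plan is to reduce the weak Harnack inequality to an averaged Poisson kernel comparison, exploiting the two-sided Green function estimates of Proposition~\ref{Gr_f_est} (whose hypotheses are in force here since $\mu$ is absolutely continuous with density bounded by~$m$). For every interior point $v \in B_1$, harmonicity of $u$ in $B_1$, together with exit from $\{|x|<r\}$ as $r \nearrow 1$ and the Ikeda--Watanabe formula, yields
\begin{align*}
u(v) \;=\; \int_{B_1^c} P_{B_1}(v,z)\,u(z)\,dz,
\qquad
P_{B_1}(v,z) \;=\; \int_{B_1} G_{B_1}(v,w)\,f_\nu(z-w)\,dw,
\end{align*}
where $f_\nu$ is the density of the L\'evy measure (pointwise bounded by a multiple of $|x|^{-d-\alpha}$ because $f_\mu \leq m$).

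Inserting the upper estimate $G_{B_1}(x,w) \leq C\,s(w)\,|x-w|^{\alpha-d}$ of Proposition~\ref{Gr_f_est} (valid for $|x|<1/2$, $|w|<1$) and averaging over $x \in B_{1/2}$ via Fubini produces
\begin{align*}
\int_{B_{1/2}} P_{B_1}(x,z)\,dx
\;\leq\; C \int_{B_1} s(w)\,f_\nu(z-w)\,\Bigl(\int_{B_{1/2}} |x-w|^{\alpha-d}\,dx\Bigr)\,dw
\;\leq\; C_1 \int_{B_1} s(w)\,f_\nu(z-w)\,dw,
\end{align*}
the inner integral being controlled uniformly in $w \in B_1$ since $\alpha>0$ makes $|x-w|^{\alpha-d}$ only an integrable singularity. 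On the other hand, for any $y \in B_{1/4}$ the trivial bound $|y-w|\leq 5/4$ for $w \in B_1$, combined with $\alpha-d<0$, gives $|y-w|^{\alpha-d} \geq (5/4)^{\alpha-d}$, and the lower Green function estimate then yields
\begin{align*}
P_{B_1}(y,z) \;\geq\; c\,(5/4)^{\alpha-d}\int_{B_1} s(w)\,f_\nu(z-w)\,dw.
\end{align*}
Dividing, $\int_{B_{1/2}} P_{B_1}(x,z)\,dx \leq C_2\,P_{B_1}(y,z)$ uniformly in $z \in B_1^c$ and $y \in B_{1/4}$.

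The conclusion is then immediate: multiplying by $u(z)\geq 0$, integrating over $z \in B_1^c$, and applying Fubini gives
\begin{align*}
\|u\|_{L^1(B_{1/2})}
\;=\; \int_{B_1^c} \Bigl(\int_{B_{1/2}} P_{B_1}(x,z)\,dx\Bigr) u(z)\,dz
\;\leq\; C_2 \int_{B_1^c} P_{B_1}(y,z)\,u(z)\,dz
\;=\; C_2\,u(y),
\end{align*}
and taking infimum over $y \in B_{1/4}$ finishes the proof with $C = C_2$.

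The step I expect to require the most care is ensuring that the Green function estimate of Proposition~\ref{Gr_f_est} really applies in our setting: the spectral (non-degenerate) measure assumption is exactly what is needed to produce the lower bound $\Phi(u)\geq c|u|^\alpha$ underlying the two-sided Green function estimate and guaranteeing $s(w)>0$ on the interior of $B_1$, so that the pointwise lower Poisson kernel bound is genuinely positive. A secondary technical point is the strict inequality $|x|<1/2$ in the proposition, which is easily handled by first proving the inequality on $B_{1/2-\varepsilon}$ and $B_{1/4-\varepsilon}$ and then letting $\varepsilon \to 0$ by monotone convergence.
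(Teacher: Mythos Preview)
Your argument is correct, but it follows a different and considerably shorter path than the paper's. Both proofs reduce to the Poisson-kernel comparison
\[
\int_{B_{1/2}} G_D(x,w)\,dx \;\le\; C\, G_D(\bar x,w)\qquad(\bar x\in B_{1/4},\ w\in D),
\]
combined with the Ikeda--Watanabe formula. You obtain this comparison in one stroke by invoking the two-sided bound $G_{B_1}(x,w)\asymp s(w)\,|x-w|^{\alpha-d}$ of Proposition~\ref{Gr_f_est}: the common factor $s(w)$ cancels, and the remaining Riesz kernel is trivially bounded above on average over $B_{1/2}$ and below at $\bar x\in B_{1/4}$. The paper instead rebuilds this comparison from scratch for $D=B_{3/4}$ through three lemmas---an upper bound on $\int_{B_{1/2}}G_D(x,w)\,dx$ via heat-kernel estimates, a lower bound on $G_D(\bar x,w)$ for $w$ in a small ball $B(\bar x,\delta_1)$ via potential-kernel estimates, and then an extension to all $w\in D$ by exploiting the regular harmonicity of $G_D(\bar x,\cdot)$ in $D\setminus B(\bar x,\delta_1)$ (a sweeping/maximum-principle step). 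Your route is more economical; the paper's is more self-contained, since it does not lean on the cited result from~\cite{bg-sz-1}, and the harmonic-extension lemma there has some independent interest.

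One point deserves more care: you apply the Poisson representation on the full domain $B_1$, which requires $u$ to be \emph{regular} harmonic there, whereas the paper's definition only guarantees the mean-value property on compactly contained subdomains. Your suggested limit $r\nearrow 1$ does not obviously yield the needed equality for general nonnegative $u$ (Fatou gives the wrong inequality). The clean fix---and what the paper actually does---is to run the whole argument on $B_{3/4}$ instead of $B_1$: by $\alpha$-scaling, Proposition~\ref{Gr_f_est} transfers to $G_{B_{3/4}}$ with the restriction $|x|<3/8$, which still contains $B_{1/4}$, and then the harmonicity hypothesis applies directly.
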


\begin{lemma}\label{prva_gornja}
Let $\alpha\in (0,2), d\geq 2$. There is a constant $c_1=c_1(\alpha,d)$ such that for every $|w|<3/4$, the inequality
  \begin{align*}
   \int_{B_{1/2}} G_{B_{3/4}}(x,w)\ \mathrm{d}x\leq c_1
  \end{align*}
  holds.
\end{lemma}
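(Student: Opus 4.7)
The plan is to exploit the symmetry of the killed Green function and reduce the integral to a mean exit time, which is then controlled by a translation-plus-scaling argument.

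First, since $X$ is symmetric so is $G_{B_{3/4}}$, and $B_{1/2}\subset B_{3/4}$. Combining this with the standard identity $\int_D G_D(w,x)\,\mathrm{d}x=\mathbb{E}^w[\tau_D]$ gives
\begin{align*}
\int_{B_{1/2}} G_{B_{3/4}}(x,w)\,\mathrm{d}x
= \int_{B_{1/2}} G_{B_{3/4}}(w,x)\,\mathrm{d}x
\leq \int_{B_{3/4}} G_{B_{3/4}}(w,x)\,\mathrm{d}x
= \mathbb{E}^{w}[\tau_{B_{3/4}}],
\end{align*}
so the task reduces to bounding $\mathbb{E}^{w}[\tau_{B_{3/4}}]$ uniformly for $|w|<3/4$. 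For this I would observe that the triangle inequality yields $B_{3/4}\subset B(w,3/2)$ whenever $|w|<3/4$, so $\tau_{B_{3/4}}\leq\tau_{B(w,3/2)}$ pathwise. Translation invariance followed by the $\alpha$-self-similarity of symmetric stable processes then gives
\begin{align*}
\mathbb{E}^{w}[\tau_{B_{3/4}}]
\leq \mathbb{E}^{w}[\tau_{B(w,3/2)}]
= \mathbb{E}^{0}[\tau_{B_{3/2}}]
= (3/2)^{\alpha}\,\mathbb{E}^{0}[\tau_{B_{1}}],
\end{align*}
and it suffices to take $c_1=(3/2)^{\alpha}\mathbb{E}^{0}[\tau_{B_{1}}]$.

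The only genuinely delicate point is therefore the finiteness of $\mathbb{E}^{0}[\tau_{B_{1}}]$. Since this lemma sits in the preparation for Theorem \ref{Weak Harnack}, one may assume $\mu$ is a spectral measure, hence non-degenerate, and so estimate \eqref{nondeg} furnishes $\Phi(u)\geq c|u|^{\alpha}$. This in turn yields the heat kernel decay $p(t,0)\leq c\,t^{-d/\alpha}$, and combined with $p(t,y)\leq p(t,0)$ and $d\geq 2>\alpha$, a time integration gives $G(0,y)\leq c|y|^{\alpha-d}$; a further integration over $B_1$ produces $\mathbb{E}^{0}[\tau_{B_{1}}]\leq\int_{B_{1}}G(0,y)\,\mathrm{d}y<\infty$. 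Thus $c_1$ depends only on $\alpha$, $d$, and the non-degeneracy constant of $\mu$, as desired.
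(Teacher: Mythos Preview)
Your argument is correct and takes a genuinely different, more conceptual route than the paper's proof. The paper bounds $G_{B_{3/4}}(x,w)\leq G(x,w)=\int_0^\infty p(t,x,w)\,\mathrm{d}t$ directly, splits the time integral at $t=1$, inserts the two-sided heat kernel estimate $p(t,x,w)\leq c\,(t^{-d/\alpha}\wedge t|x-w|^{-\alpha-d})$ for small times (this uses both non-degeneracy and the hypothesis $f_\mu\leq m$), and then integrates term by term over $x\in B_{1/2}$. Your approach instead exploits the symmetry $G_{B_{3/4}}(x,w)=G_{B_{3/4}}(w,x)$ to recognise the integral as being bounded by $\mathbb{E}^w[\tau_{B_{3/4}}]$, and then controls this mean exit time by translation and scaling. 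This is shorter, avoids any explicit space integration, and in fact requires only the on-diagonal bound $p(t,0)\leq c\,t^{-d/\alpha}$, hence only non-degeneracy of $\mu$ and not the upper bound $f_\mu\leq m$.

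One small inaccuracy: the sentence ``combined with $p(t,y)\leq p(t,0)$ and $d\geq 2>\alpha$, a time integration gives $G(0,y)\leq c|y|^{\alpha-d}$'' does not quite go through as written, because $\int_0^1 t^{-d/\alpha}\,\mathrm{d}t=\infty$ when $d\geq\alpha$; the pointwise bound $G(0,y)\leq c|y|^{\alpha-d}$ genuinely needs an off-diagonal estimate near $t=0$. However, you do not need that pointwise bound at all. It suffices to observe
\[
\mathbb{P}^0(\tau_{B_1}>t)\leq \mathbb{P}^0(X_t\in B_1)\leq |B_1|\,p(t,0)\leq c\,t^{-d/\alpha},
\]
whence $\mathbb{E}^0[\tau_{B_1}]=\int_0^\infty \mathbb{P}^0(\tau_{B_1}>t)\,\mathrm{d}t\leq 1+\int_1^\infty c\,t^{-d/\alpha}\,\mathrm{d}t<\infty$ since $d>\alpha$. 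With this adjustment your proof is complete and cleaner than the paper's.
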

\begin{proof}
By the inequality $p_D\leq p$ and the estimate of the transition density $p$ for small times (see e.g. \cite{sz-1}, Theorem 1), it follows:
\begin{align*}
&\int_{B_{1/2}} G_{B_{3/4}}(x,w)\ \mathrm{d}x\\
&= \int_{B_{1/2}} \bigg[\int_0 ^\infty p_{B_{3/4}}(t,x,w)\ \mathrm{d}t\bigg]\ \mathrm{d}x \\
&\leq\int_{B_{1/2}} \bigg[\int_0 ^\infty p(t,x,w)\ \mathrm{d}t\bigg]\ \mathrm{d}x\\
&\leq c(\alpha,d) \int_{B_{1/2}} \bigg[\int_0 ^1 \bigg(t^{-d/\alpha} \wedge \frac{t}{|x-w|^{\alpha+d}}\bigg)\ \mathrm{d}t\bigg]\ \mathrm{d}x\\
&\text{\;\;\;\;\;}+\int_{B_{1/2}} \bigg[\int_1 ^\infty t^{-d/\alpha}\cdot p(1, t^{-1/\alpha}x, t^{-1/\alpha}w)\ \mathrm{d}t\bigg]\ \mathrm{d}x\\
&\leq c(\alpha,d) \int_{B_{1/2}} \bigg[\int_0 ^{|x-w|^\alpha\wedge 1} \frac{t}{|x-w|^{\alpha+d}}\ \mathrm{d}t\bigg]\ \mathrm{d}x\\
&\text{\;\;\;\;\;} + c(\alpha,d) \int_{B_{1/2}} \bigg[\int_{|x-w|^\alpha\wedge 1}^1 t^{-d/\alpha}\ \mathrm{d}t\bigg]\ \mathrm{d}x\\
&\text{\;\;\;\;\;}+c(\alpha,d) \bigg[\int_{B_{1/2}} \int_1 ^\infty t^{-d/\alpha}\ \mathrm{d}t\bigg]\ \mathrm{d}x\\
&\leq c(\alpha,d) \Bigg[ \int_{B_{1/2}} |x-w|^{\alpha-d}\ \mathrm{d}x\\
&\text{\;\;\;\;\;\;\;\;\;\;\;\;\;\;\;\;\;}+\frac{\alpha}{d-\alpha}\int_{B_{1/2}}\bigg[\big(|x-w|^\alpha\wedge 1\big)^{\frac{\alpha-d}{\alpha}}-1\bigg]\ \mathrm{d}x\Bigg]\\
&\text{\;\;\;\;\;}+c(\alpha,d)\ \frac{\alpha}{d-\alpha}|B_{1/2}|\\
&\leq c(\alpha,d).
\end{align*}
\end{proof}

\begin{lemma}\label{prva_donja_1}
Let $\alpha \in (0,2), d\geq 2$. There exist $\delta_1=\delta_1(\alpha,d)>0$ 
and $c_2=c_2(\alpha,d,\delta_1)$ such that for every $|\bar{x}|<1/4$ and every $w\in B(\bar{x},\delta_1)$
the inequality
\begin{align*}
G_{B_{3/4}}(\bar{x},w)\geq c_2
\end{align*}
holds.
\end{lemma}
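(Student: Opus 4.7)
The plan is to derive the lower bound on $G_{B_{3/4}}$ by a scaling reduction to Proposition \ref{Gr_f_est}, and then to exploit that the mean exit time $s(v)=\mathbb{E}^v[\tau_{B_1}]$ is uniformly bounded below on a compact subset of $B_1$.

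First, the self-similarity of the symmetric $\alpha$-stable process gives the Green-function scaling $G_{B_{ar}}(ax,ay) = a^{\alpha-d}\, G_{B_r}(x,y)$ for every $a>0$ and $x,y\in B_r$. I would take $a=3/4$, $r=1$, and fix $\delta_1\le 1/4$. Then for $|\bar x|<1/4$ and $w\in B(\bar x,\delta_1)$ one has $|4\bar x/3|<1/3<1/2$ and $|4w/3|\le 2/3<1$, so the hypotheses of Proposition \ref{Gr_f_est} are met (the bound $f_\mu\le m$ is the standing assumption of Theorem \ref{Weak Harnack}). Applying the lower-bound half of that proposition together with the scaling identity, and using that the prefactors $(3/4)^{\alpha-d}$ and $(4/3)^{\alpha-d}$ cancel, I obtain
\begin{align*}
 G_{B_{3/4}}(\bar x, w) \;\ge\; c^{-1}\, s\!\left(\tfrac{4w}{3}\right)\,|\bar x-w|^{\alpha-d}.
\end{align*}

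Second, the function $v\mapsto s(v)$ is continuous and strictly positive on $B_1$, hence bounded below by some constant $c_0=c_0(\alpha,d)>0$ on the compact set $\{|v|\le 2/3\}$, which contains $4w/3$. Since $\alpha-d<0$ (because $\alpha\in(0,2)$ and $d\ge 2$), the condition $|\bar x-w|\le\delta_1$ forces $|\bar x-w|^{\alpha-d}\ge\delta_1^{\alpha-d}$. Putting the two estimates together,
\begin{align*}
 G_{B_{3/4}}(\bar x, w)\;\ge\; c^{-1}\,c_0\,\delta_1^{\alpha-d}\;=:\;c_2(\alpha,d,\delta_1),
\end{align*}
which yields the desired uniform lower bound.

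The only point that requires any care is the uniform positivity of $s$ on $\{|v|\le 2/3\}$. I would justify this either by continuity and non-vanishing of $s$ on $B_1$ combined with compactness, or more directly by the standard fact that, uniformly over any compact subset of $B_1$, the process remains in $B_1$ for a fixed positive time with probability bounded away from zero; either route avoids any further technical computation.
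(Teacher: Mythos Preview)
Your argument is correct but follows a genuinely different route from the paper. The paper proves the lemma from scratch via the decomposition $G_{B_{3/4}}(\bar x,w)=G(\bar x,w)-\mathbb{E}^{\bar x}[G(X_{\tau_{B_{3/4}}},w)]$: it bounds the subtracted term from above by a constant $\bar c_1$ using small-time heat-kernel estimates for $p(t,\cdot)$ and the Poisson kernel, bounds the free Green function from below by $\bar c_2|\bar x-w|^{\alpha-d}$ via continuity of $p(1,\cdot)$ at the origin together with scaling, and then chooses $\delta_1$ small enough that $\bar c_2\,\delta_1^{\alpha-d}>\bar c_1$. You instead bypass all of this by scaling to $B_1$ and invoking the lower bound in Proposition~\ref{Gr_f_est}, which already encodes the comparison $G_{B_1}(x,y)\asymp s(y)|x-y|^{\alpha-d}$; the remaining input, a uniform lower bound on $s(\cdot)$ over $\{|v|\le 2/3\}$, is a soft fact. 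Your approach is considerably shorter and more conceptual, since Proposition~\ref{Gr_f_est} (quoted from \cite{bg-sz-1}) has already absorbed the hard analysis; the paper's approach, by contrast, is self-contained in that it does not rely on that external two-sided estimate and shows explicitly how $\delta_1$ arises from the competition between the two terms in the decomposition.
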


\begin{proof}

Using
\begin{align}
G_{B_{3/4}}(\bar{x},w)=G(\bar{x},w)-\mathbb{E}^{\bar{x}}[G(X_{\tau_{B_{3/4}}},w)],\label{formula}
\end{align}
in order to prove the lemma, we compute the estimates for $G(\bar{x},w)$ from below and $\mathbb{E}^{\bar{x}}[G(X_{\tau_{B_{3/4}}},w)]$ from above.
Using the heat kernel estimates for small times (\cite{sz-1}, Theorem 1), we obtain
\begin{align}
&\mathbb{E}^{\bar{x}}[G(X_{\tau_{B_{3/4}}},w)]\notag\\
&=\int_{{B_{3/4}}^c}\ G(u,w)P_{B_{3/4}}(\bar{x},u)\ \mathrm{d}u\notag\\
&= \int_{{B_{3/4}}^c}\ P_{B_{3/4}}(\bar{x},u)\bigg[\int_{0}^{\infty} p(t,u,w)\ \mathrm{d}t\bigg]\ \mathrm{d}u\notag\\
&= \int_{{B_{3/4}}^c}\ P_{B_{3/4}}(\bar{x},u)\bigg[\int_{0}^{1} p(t,u,w)\ \mathrm{d}t\bigg]\ \mathrm{d}u\notag\\
&\ + \int_{{B_{3/4}}^c}\ P_{B_{3/4}}(\bar{x},u)\bigg[\int_{1}^{\infty} p(t,u,w)\ \mathrm{d}t\bigg]\ \mathrm{d}u\notag\\
&\leq c(\alpha,d)\int_{{B_{3/4}}^c}\ P_{B_{3/4}}(\bar{x},u) \bigg[\int_{0}^{1} \bigg(t^{-d/\alpha} \wedge \frac{t}{|u-w|^{\alpha+d}}\bigg)\ \mathrm{d}t\bigg]\ \mathrm{d}u\notag\\
&+ \int_{{B_{3/4}}^c}\ P_{B_{3/4}}(\bar{x},u)\bigg[\int_{1}^{\infty} t^{-d/\alpha}\cdot p(1, t^{-1/\alpha}u, t^{-1/\alpha}w)\ \mathrm{d}t\bigg]\ \mathrm{d}u\notag\\
&\leq c(\alpha,d)\int_{{B_{3/4}}^c}\ P_{B_{3/4}}(\bar{x},u)\bigg[ \int_{0}^{1} \bigg(t^{-d/\alpha} \wedge \frac{t}{|u-w|^{\alpha+d}}\bigg)\ \mathrm{d}t\bigg]\ \mathrm{d}u\notag\\
&+ c(\alpha, d)\int_{{B_{3/4}}^c}\ P_{B_{3/4}}(\bar{x},u)\bigg[\int_{1}^{\infty} t^{-d/\alpha}\ \mathrm{d}t\bigg]\ \mathrm{d}u\notag\\
&\leq c(\alpha,d)\int_{{B_{3/4}}^c}\ P_{B_{3/4}}(\bar{x},u)\bigg[\int_{0}^{1} \bigg(t^{-d/\alpha} \wedge \frac{t}{|u-w|^{\alpha+d}}\bigg)\ \mathrm{d}t\bigg]\ \mathrm{d}u\notag\\
&+ c(\alpha, d)\label{int}.
\end{align}

Examining the integral in \eqref{int} more closely, we obtain: 
\begin{align}
&\int_{{B_{3/4}}^c}\ P_{B_{3/4}}(\bar{x},u)\bigg[\int_{0}^{1} \bigg(t^{-d/\alpha} \wedge \frac{t}{|u-w|^{\alpha+d}}\bigg)\ \mathrm{d}t\bigg]\ \mathrm{d}u\notag\\
&=\int_{{B_{3/4}}^c}\ P_{B_{3/4}}(\bar{x},u)\mathbf{1}_{\{|w-u|<1\}}(u)\bigg[\int_{0}^{1} \bigg(t^{-d/\alpha} \wedge \frac{t}{|u-w|^{\alpha+d}}\bigg)\ \mathrm{d}t\bigg]\ \mathrm{d}u\notag\\
&+\int_{{B_{3/4}}^c}\ P_{B_{3/4}}(\bar{x},u)\mathbf{1}_{\{|w-u|>1\}}(u)\bigg[\int_{0}^{1} \bigg(t^{-d/\alpha} \wedge \frac{t}{|u-w|^{\alpha+d}}\bigg)\ \mathrm{d}t\bigg]\ \mathrm{d}u\notag\\
&=I_1+I_2.\label{skupa}
\end{align}

\begin{align}
 I_1&=\int_{{B_{3/4}}^c}\ P_{B_{3/4}}(\bar{x},u)\mathbf{1}_{\{|w-u|<1\}}(u)\notag\\
 &\text{\;\;\;\;\;\;\;\;\;\;\;\;\;\;\;\;\;\;\;\;\;\;\;\;\;\;\;\;\;\;\;\;\;\;\;\;\;\;\;\;\;\;\;\;}\bigg[\int_{0}^{1} \bigg(t^{-d/\alpha} \wedge \frac{t}{|u-w|^{\alpha+d}}\bigg)\ \mathrm{d}t\bigg]\ \mathrm{d}u\notag\\
 &=\int_{{B_{3/4}}^c}\ P_{B_{3/4}}(\bar{x},u)\mathbf{1}_{\{|w-u|<1\}}(u)|w-u|^{-\alpha-d}\bigg[\int_{0}^{|w-u|^\alpha}t\ \mathrm{d}t\bigg]\ \mathrm{d}u\notag\\
 &+\int_{{B_{3/4}}^c}\ P_{B_{3/4}}(\bar{x},u)\mathbf{1}_{\{|w-u|<1\}}(u)\bigg[\int_{|w-u|^\alpha}^{1}t^{-d/\alpha}\ \mathrm{d}t\bigg]\ \mathrm{d}u\notag\\
 &=c_1 \int_{{B_{3/4}}^c}\ P_{B_{3/4}}(\bar{x},u)\mathbf{1}_{\{|w-u|<1\}}(u)|w-u|^{\alpha-d}\ \mathrm{d}u\notag\\
 &+c_2\int_{{B_{3/4}}^c}\ P_{B_{3/4}}(\bar{x},u)\mathbf{1}_{\{|w-u|<1\}}(u)\frac{\alpha}{d-\alpha}\big(|w-u|^{\alpha-d}-1\big)\ \mathrm{d}u\notag\\
 &\leq c_1(\alpha,d,\bar{\delta}_1),\label{prvi_int}
\end{align}
where in the last inequality we have used $w \in B(\bar{x},\bar{\delta}_1)$, for $\bar{\delta}_1>0$ small enough.

\begin{align}
 I_2&=\int_{{B_{3/4}}^c}\ P_{B_{3/4}}(\bar{x},u)\mathbf{1}_{\{|w-u|>1\}}(u)\notag\\
   &\text{\;\;\;\;\;\;\;\;\;\;\;\;\;\;\;\;\;\;\;\;\;\;\;\;\;\;\;\;\;\;\;\;\;\;\;\;\;}\bigg[\int_{0}^{1} \bigg(t^{-d/\alpha} \wedge \frac{t}{|u-w|^{\alpha+d}}\bigg)\ \mathrm{d}t\bigg]\ \mathrm{d}u\notag\\
   &= \int_{{B_{3/4}}^c}\ P_{B_{3/4}}(\bar{x},u)\mathbf{1}_{\{|w-u|>1\}}(u)|w-u|^{-\alpha-d}\bigg[\int_{0}^{1} t\ \mathrm{d}t\bigg]\ \mathrm{d}u\notag\\
   &= c\int_{{B_{3/4}}^c}\ P_{B_{3/4}}(\bar{x},u)\mathbf{1}_{\{|w-u|>1\}}(u)|w-u|^{-\alpha-d}\ \mathrm{d}u\notag\\
   &\leq c_2.\label{drugi_int}
\end{align}

In conclusion, by \eqref{int}, \eqref{skupa}, \eqref{prvi_int} and \eqref{drugi_int}, we obtain
\begin{align}\label{gornja}
 \mathbb{E}^{\bar{x}}[G(X_{\tau_{B_{3/4}}},w)]\leq \bar{c}_1(\alpha, d, \bar{\delta}_1),
\end{align}
for all $w \in B(\bar{x},\bar{\delta}_1)$ and $\bar{\delta}_1>0$ small enough.

To estimate $G(\bar{x},w)$ from below, we use the continuity of the potential density (see \cite{taylor}).

Due to
\begin{align*}
 p(1,0)\geq c>0,
\end{align*}
by continuity of $p(1, \cdot)$ in $x=0$, there is $R>0$ such that $p(1,x)>\frac{1}{2}\cdot p(1,0)$, for all $|x|<R$.

Furthermore, for $|\xi|=1$, since:
\begin{align*}
 G(0,\xi)&\geq\int_{R^{-\alpha}}^{\infty} p(t,\xi)\ \mathrm{d}t\\
       &= \int_{R^{-\alpha}}^{\infty}t^{-d/\alpha}\cdot p\Big(1,\frac{\xi}{t^{1/\alpha}}\Big)\ \mathrm{d}t > \frac{1}{2}\cdot \int_{R^{-\alpha}}^{\infty}t^{-d/\alpha}\cdot p(1,0) \ \mathrm{d}t\\
	&=c_1(\alpha,d)>0,
\end{align*}
we obtain
\begin{align}\label{ocjena}
 G(0,\xi)\geq c_1(\alpha,d).
\end{align}

For $|x|\neq 0$, by scaling and \eqref{ocjena}
\begin{align*}
 G(0,x)=|x|^{\alpha-d}\cdot G\bigg(0,\frac{x}{|x|}\bigg)\geq c_1 \cdot |x|^{\alpha-d}.
\end{align*}

Therefore,
\begin{align}\label{donja}
 G(\bar{x},w) \geq \bar{c}_2 \cdot |\bar{x}-w|^{\alpha-d}.
\end{align}

Now, choose $\delta_1$ such that $\delta_1 < (\bar{c}_2/(\bar{c}_1+c))^{\frac{1}{d-\alpha}}\wedge \bar{\delta}_1$, where $c>0$. 
Then for every $|\bar{x}|<1/4$ and $w\in B(\bar{x},\delta_1)$, combining \eqref{formula}, \eqref{gornja} and \eqref{donja}, we obtain
\begin{align*}
 G_{B_{3/4}}(\bar{x},w)\geq \bar{c}_2\cdot |\bar{x}-w|^{\alpha-d}-\bar{c}_1\geq c>0.
\end{align*}
Define $c_2=c$ and now the statement follows.
\end{proof}

\begin{remark}\label{nap}
Notice that, according to the Lemma \ref{prva_gornja} 
and Lemma \ref{prva_donja_1}, there are $\tilde{c}=\tilde{c}(\alpha,d)$ and $\delta_1=\delta_1(\alpha,d)$ such that
for 
every $\bar{x}\in B_{1/4}$ and for every $w\in B(\bar{x},\delta_1)$
the inequality
\begin{align*}
  \int_{B_{1/2}}\ G_{B_{3/4}}(x,w)\ \mathrm{d}x \leq \tilde{c}\cdot G_{B_{3/4}}(\bar{x},w)
\end{align*}
holds.
\end{remark}

\begin{lemma}\label{treca_donja_1}
 For $\alpha \in (0,2)$ and $d\geq 2$, let  $\delta_1>0$ be as in Lemma \ref{prva_donja_1}. There is a constant $c_3=c_3(\alpha,d)$ such that for every $\bar{x}\in B_{1/4}$ and every $\tilde{u}\in B(0,3/4)\setminus B(\bar{x},\delta_1)$
 the inequality
\begin{align*}
\int_{B_{1/2}}\ G_{B_{3/4}} (x,\tilde{u})\ \mathrm{d}x \leq c_3 \cdot G_{B_{3/4}} (\bar{x}, \tilde{u})
\end{align*}
holds.
\end{lemma}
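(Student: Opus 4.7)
The plan is to sandwich both sides of the claimed inequality between multiples of $s(\tilde u):=\mathbb{E}^{\tilde u}[\tau_{B_{3/4}}]$, the expected exit time from $B_{3/4}$. For the upper bound on the left-hand side, the symmetry of $G_{B_{3/4}}$ (valid since the process is symmetric), Fubini's theorem, and the occupation-time interpretation of the Green function yield
\[
\int_{B_{1/2}} G_{B_{3/4}}(x,\tilde u)\,\mathrm{d}x = \int_{B_{1/2}} G_{B_{3/4}}(\tilde u,x)\,\mathrm{d}x = \mathbb{E}^{\tilde u}\!\left[\int_0^{\tau_{B_{3/4}}}\!\mathbf{1}_{B_{1/2}}(X_t)\,\mathrm{d}t\right]\leq s(\tilde u).
\]

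For the lower bound on $G_{B_{3/4}}(\bar x,\tilde u)$, I would invoke Proposition \ref{Gr_f_est} after rescaling the reference ball from $B_1$ to $B_{3/4}$. This is permissible because the identity $r X_{r^{-\alpha}t}\stackrel{d}{=} X_t$, which holds for every symmetric $\alpha$-stable L\'evy process with characteristic exponent of the form \eqref{k_e}, translates to $G_{B_r}(x,y)=r^{\alpha-d}G_{B_1}(x/r,y/r)$ and hence to the analogous two-sided estimate for $G_{B_{3/4}}$, valid on $B_{3/8}\times B_{3/4}$. Since $\bar x\in B_{1/4}\subset B_{3/8}$ and $\tilde u\in B_{3/4}$, this gives
\[
G_{B_{3/4}}(\bar x,\tilde u)\geq c(\alpha,d)\,s(\tilde u)\,|\bar x-\tilde u|^{\alpha-d}.
\]
The triangle inequality yields $|\bar x-\tilde u|\leq|\bar x|+|\tilde u|\leq 1$; because $\alpha<2\leq d$ we have $\alpha-d<0$, whence $|\bar x-\tilde u|^{\alpha-d}\geq 1$ and therefore $G_{B_{3/4}}(\bar x,\tilde u)\geq c(\alpha,d)\,s(\tilde u)$. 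Combining with the upper bound above, one may take $c_3:=c(\alpha,d)^{-1}$.

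The argument has no substantive obstacle; the one routine verification is the rescaling of Proposition \ref{Gr_f_est}. It is worth remarking that the separation hypothesis $|\tilde u-\bar x|\geq\delta_1$ is not used quantitatively here, because the crude bound $|\bar x-\tilde u|\leq 1$ together with $\alpha-d<0$ already forces $|\bar x-\tilde u|^{\alpha-d}\geq 1$; the assumption simply makes this lemma complementary to Lemma \ref{prva_donja_1}, which treats the near-diagonal case $\tilde u\in B(\bar x,\delta_1)$.
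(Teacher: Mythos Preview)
Your argument is correct, but it is genuinely different from the paper's proof. The paper does not invoke Proposition~\ref{Gr_f_est} here at all; instead it runs a harmonic-sweep (maximum-principle) argument. Since $G_{B_{3/4}}(x,\cdot)$ is regular harmonic in $B_{3/4}\setminus B(\bar x,\delta_1)$, one writes
\[
\int_{B_{1/2}} G_{B_{3/4}}(x,\tilde u)\,\mathrm{d}x
=\int_{B_{1/2}}\mathbb{E}^{\tilde u}\bigl[G_{B_{3/4}}(x,X_{\tau_{B_{3/4}\setminus B(\bar x,\delta_1)}})\bigr]\,\mathrm{d}x,
\]
integrates against the exit distribution $P_{B_{3/4}\setminus B(\bar x,\delta_1)}(\tilde u,\cdot)$, discards the part landing outside $B_{3/4}$ (where $G_{B_{3/4}}$ vanishes), applies the near-diagonal bound of Remark~\ref{nap} on the part landing in $B(\bar x,\delta_1)$, and then unwinds using harmonicity of $G_{B_{3/4}}(\bar x,\cdot)$ to recover $\tilde c\,G_{B_{3/4}}(\bar x,\tilde u)$. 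Your route instead sandwiches both sides between multiples of $s(\tilde u)$ via the Bogdan--Sztonyk two-sided Green estimate, which is shorter and makes the boundary degeneracy as $\tilde u\to\partial B_{3/4}$ transparent, at the cost of importing Proposition~\ref{Gr_f_est} as a black box; the paper's argument is self-contained relative to Lemmas~\ref{prva_gornja} and~\ref{prva_donja_1} and illustrates a transfer technique that does not need the sharp Green bound. Your observation that $\delta_1$ is not quantitatively used is specific to your approach: in the paper's proof $\delta_1$ is essential, as it fixes the domain $B_{3/4}\setminus B(\bar x,\delta_1)$ from which one sweeps.
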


\begin{proof}
The proof relies on the maximum principle (cf. \cite{landkof}).
We use the 
fact that
$G_D (\bar{x}, \cdot)$ is regular harmonic in $D \setminus B(\bar{x}, \varepsilon)$ with respect to $X$  for every $\varepsilon > 0$ (cf. \cite{bg-1}). 

\begin{align*}
\int_{B_{1/2}} \ G_{B_{3/4}}(x,\tilde{u})\ \mathrm{d}x &= \int_{B_{1/2}} \mathbb{E}^{\tilde{u}}[G_{B_{3/4}}(x, X_{\tau_{{B_{3/4}}\setminus B(\bar{x},\delta_1)}})]\ \mathrm{d}x\\
							 &= \int_{B_{1/2}} \bigg[\ \int_{\mathbb{R}^2\setminus ({B_{3/4}}\setminus B(\bar{x},\delta_1))}\\
							 &\text{\;\;\;\;\;\;\;\;\;\;\;\;\;\;\;\;\;\;}G_{B_{3/4}}(x,z)\ P_{{B_{3/4}}\setminus B(\bar{x},\delta_1)}(\tilde{u},z)\ \mathrm{d}z\bigg]\ \mathrm{d}x\\
							 &= \int_{\mathbb{R}^2\setminus ({B_{3/4}}\setminus B(\bar{x},\delta_1))} \ \bigg[\int_{B_{1/2}} \\
							 &\text{\;\;\;\;\;\;\;\;\;\;\;\;\;\;\;\;\;\;}G_{B_{3/4}}(x,z)\ P_{{B_{3/4}}\setminus B(\bar{x},\delta_1)}(\tilde{u},z)\ \mathrm{d}x\bigg]\ \mathrm{d}z\\
							 &= \int_{B(\bar{x},\delta_1)}\bigg[\int_{B_{1/2}} \\
							 &\text{\;\;\;\;\;\;\;\;\;\;\;\;\;\;\;\;\;\;}G_{B_{3/4}}(x,z)\ P_{{B_{3/4}}\setminus B(\bar{x},\delta_1)}(\tilde{u},z)\ \mathrm{d}x\bigg]\ \mathrm{d}z\\
							 &\text{\;\;}+ \int_{\mathbb{R}^2\setminus B_{3/4}} \bigg[\int_{B_{1/2}} \\
							 &\text{\;\;\;\;\;\;\;\;\;\;\;\;\;\;\;\;\;\;}G_{B_{3/4}}(x,z)\ P_{{B_{3/4}}\setminus B(\bar{x},\delta_1)}(\tilde{u},z)\ \mathrm{d}x\bigg]\ \mathrm{d}z\\
							 &=\int_{B(\bar{x},\delta_1)}  \bigg[\int_{B_{1/2}}\\
							 &\text{\;\;\;\;\;\;\;\;\;\;\;\;\;\;\;\;\;\;}G_{B_{3/4}}(x,z)\ P_{{B_{3/4}}\setminus B(\bar{x},\delta_1)}(\tilde{u},z)\ \mathrm{d}x\bigg]\ \mathrm{d}z\\     
                                                         &\overset{Rem. \ref{nap}}{\leq}\tilde{c}\int_{B(\bar{x},\delta_1)} \ G_{B_{3/4}}(\bar{x},z)P_{{B_{3/4}}\setminus B(\bar{x},\delta_1)}(\tilde{u},z)\ \mathrm{d}z\\
							 &= \tilde{c} \int_{\mathbb{R}^2\setminus ({B_{3/4}}\setminus B(\bar{x},\delta_1))}
							 G_{B_{3/4}}(\bar{x},z)P_{{B_{3/4}}\setminus B(\bar{x},\delta_1)}(\tilde{u},z)\ \mathrm{d}z\\
							 &= \tilde{c} \cdot \mathbb{E}^{\tilde{u}}[G_{B_{3/4}}(\bar{x}, X_{\tau_{{B_{3/4}}\setminus B(\bar{x},\delta_1)}})]\\
							 &= \tilde{c} \cdot G_{B_{3/4}} (\bar{x}, \tilde{u}).
\end{align*}

Define $c_3=\tilde{c}$ and the lemma follows.
\end{proof}

\begin{proof}[Proof of Theorem \ref{Weak Harnack}]
\begin{align}
&\|u\|_{L^{1}(B_{1/2})}\notag\\
&=\int_{B_{1/2}} u(x)\ \mathrm{d}x\notag\\
&=\int_{B_{1/2}} \mathbb{E}^{x}[u(X_{\tau_{B_{3/4}}})]\ \mathrm{d}x\notag \\
&=\int_{B_{1/2}} \bigg[\int_{(\bar{B}_{3/4})^{c}} \ u(y)P_{B_{3/4}} (x,y)\ \mathrm{d}y\bigg]\mathrm{d}x\notag \\ 
&=\int_{B_{1/2}} \Bigg[\int_{(\bar{B}_{3/4})^{c}} u(y)\ \bigg[\int _{B(y,3/4)} f_\nu(z) \ G_{B_{3/4}}(x,y-z)\ \mathrm{d}z\bigg]\ \mathrm{d}y\Bigg]\ \mathrm{d}x\notag \\
&= \int_{(\bar{B}_{3/4})^{c}} u(y) \Bigg[\int _{B(y,3/4)} f_\nu(z)\ \bigg[\int_{B_{1/2}}  G_{B_{3/4}}(x,y-z)\ \mathrm{d}x\bigg]\ \mathrm{d}z\Bigg]\ \mathrm{d}y\label{zad}
\end{align}

By Remark \ref{nap} and Lemma \ref{treca_donja_1}
there is a constant $c=c(\alpha,d)$  
such that for every $\bar{x}\in B_{1/4}$
\begin{align*}  
  \eqref{zad}\ &\leq\ c\cdot \int_{(\bar{B}_{3/4})^{c}}  u(y)\bigg[\int _{B(y,3/4)} \mathbf{1}_{B(\bar{x},\delta_1)}(y-z)\\
  &\text{\;\;\;\;\;\;\;\;\;\;\;\;\;\;\;\;\;\;\;\;\;\;\;\;\;\;\;\;\;\;\;\;\;\;\;\;\;\;\;\;\;\;\;\;\;\;\;\;\;\;}f_\nu(z)\cdot G_{B_{3/4}}(\bar{x},y-z)\ \mathrm{d}z\bigg]\ \mathrm{d}y\\
  &\text{\;\;}+ c\cdot \int_{(\bar{B}_{3/4})^{c}} u(y)\bigg[\int _{B(y,3/4)} \mathbf{1}_{B(\bar{x},\delta_1)^c}(y-z)\\
  &\text{\;\;\;\;\;\;\;\;\;\;\;\;\;\;\;\;\;\;\;\;\;\;\;\;\;\;\;\;\;\;\;\;\;\;\;\;\;\;\;\;\;\;\;\;\;\;\;\;\;\;}f_\nu(z)\cdot G_{B_{3/4}}(\bar{x},y-z)\ \mathrm{d}z\bigg]\ \mathrm{d}y\\
  &= c\cdot \int_{(\bar{B}_{3/4})^{c}} u(y)\bigg[\int _{B(y,3/4)} f_\nu(z)\ G_{B_{3/4}}(\bar{x},y-z)\ \mathrm{d}z\bigg]\ \mathrm{d}y\\
  &=c\cdot \int_{(\bar{B}_{3/4})^{c}} u(y)P_{B_{3/4}} (\bar{x},y)\ \mathrm{d}y\\
  &=c\cdot \mathbb{E}^{\bar{x}}[u(X_{\tau_{{B_{3/4}}}})]\\
  &=c\cdot u(\bar{x}).
\end{align*}

Since the inequality
\begin{align*}
 \|u\|_{L^{1}(B_{1/2})}\leq c\cdot u(\bar{x}),
\end{align*}
holds for any $\bar{x}\in B_{1/4}$, the proof is finished.
\end{proof}

\hfill \break

\section{Appendix}
\label{Ap}

\begin{lemma}{\label{lema_pr}}
Let:
\begin{enumerate}
\item $\alpha_k = 2^{-k}, \ \beta_k = \frac{1}{k(k+1)}$, 
\item $\alpha_k = a^{-k}, \ \beta_k = b^{-k}$, \  $1 < b \leq a$,
\item $\alpha_k =  \frac{1}{k(k+1)} , \ \beta_k = k^{-1-\delta},\ \delta > 0$, 
\item $\alpha_k = 2^{-(k^3)}, \ \beta_k = 2^{-(k^2)}$\\
$\alpha_k = a^{-(k^3)}, \ \beta_k = b^{-(k^2)}$,\quad $a>1,b>1$.
\end{enumerate}
\end{lemma}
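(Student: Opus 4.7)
My plan is to reduce condition \eqref{uvjet} to a transparent tail estimate and then verify its failure in each of the four cases by direct computation. First I would set $\varepsilon_n := \pi/2 - A_n - B_n = R_\alpha(n) + R_\beta(n)$, where $R_\alpha(n) = \sum_{k > n}\alpha_k$ and $R_\beta(n) = \sum_{k > n}\beta_k$. If $A + B \neq \pi/2$ then either the prerequisite $A, B < \pi/2$ from Lemma~\ref{lema} already fails, or $\tan(A_n + B_n)$ stays bounded and the ratio in \eqref{uvjet} converges to $1$; in either situation \eqref{uvjet} fails. In the remaining case $A + B = \pi/2$, I would use $\tan(\pi/2 - x) = \cot(x) \sim 1/x$ as $x \to 0^+$ to deduce
\begin{align*}
\frac{\tan(A_n + B_{n+1})}{\tan(A_n + B_n)} \;\sim\; \frac{\varepsilon_n}{\varepsilon_n - \beta_{n+1}},
\end{align*}
so that \eqref{uvjet} is equivalent to
\begin{align*}
\frac{R_\alpha(n) + R_\beta(n+1)}{\beta_{n+1}} \longrightarrow 0.
\end{align*}

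With this reformulation, each of the four cases becomes a routine tail computation. For case 1, telescoping $\beta_k = 1/k - 1/(k+1)$ gives $R_\beta(n+1) = 1/(n+2)$ while $\beta_{n+1} = 1/((n+1)(n+2))$, hence $R_\beta(n+1)/\beta_{n+1} = n+1 \to \infty$. For case 2, the geometric tails give $R_\beta(n+1)/\beta_{n+1} = 1/(b-1)$, a positive constant that never tends to $0$. For case 3, one has $R_\alpha(n) = 1/(n+1)$ by telescoping and $\beta_{n+1} = (n+1)^{-1-\delta}$, so $R_\alpha(n)/\beta_{n+1} = (n+1)^\delta \to \infty$. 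For case 4, in the natural parameter range $A + B < \pi/2$, so the bounded-tangent alternative of the previous paragraph applies directly.

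The hard part will be case 4. The super-exponential decay of both sequences makes \emph{both} tail ratios $R_\alpha(n)/\beta_{n+1}$ and $R_\beta(n+1)/\beta_{n+1}$ tend to $0$, so if one could tune $a, b > 1$ to force $A + B = \pi/2$ exactly, then \eqref{uvjet} would actually hold for that pair. The delicate step is therefore to rule out this boundary case: I would either argue directly that the sums $A(a) = \sum a^{-k^3}$ and $B(b) = \sum b^{-k^2}$, viewed as continuous monotone functions of the parameters, cannot be made to satisfy $A + B = \pi/2$ together with $A, B < \pi/2$ for any admissible pair, or fall back on the companion condition \eqref{uvjet_d}, which fails in case 4 because $\alpha_{n+1}/\varepsilon_{n+1} \to 0$ far too quickly to satisfy the lower bound $(c+2)/(c-1)$ required by \eqref{uvjet_d}.
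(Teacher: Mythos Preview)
Your reduction is exactly the paper's strategy in different clothing: the paper passes to $\cos(A_n+B_{n+1})/\cos(A_n+B_n)=\sin(\tfrac{\pi}{2}-A_n-B_{n+1})/\sin(\tfrac{\pi}{2}-A_n-B_n)$ and compares this with $(\tfrac{\pi}{2}-A_n-B_{n+1})/(\tfrac{\pi}{2}-A_n-B_n)$, which is your $(\varepsilon_n-\beta_{n+1})/\varepsilon_n$. For cases 1--3 the two arguments coincide; you are cleaner in explicitly separating $A+B=\pi/2$ from $A+B\neq\pi/2$, whereas the paper handles cases 1 and 2 by a heuristic L'H\^opital-type step and cases 3, 4 via the integral test, writing $\tfrac{\pi}{2}-A_n-B_n=\sum_{k>n}\alpha_k+\sum_{k>n}\beta_k$ without ever checking that $A+B=\pi/2$.

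Your hesitation in case~4 is justified and in fact uncovers a defect in the paper's own argument. The paper evaluates $\int_{n+1}^\infty b^{-x^2}\,\mathrm{d}x$ as $\tfrac{1}{2}\sqrt{\pi/\ln b}\cdot b^{-(n+1)^2/2}$; this is wrong (the correct Gaussian tail is $\sim b^{-(n+1)^2}/\bigl(2(n+1)\ln b\bigr)$), and with the correct asymptotic the paper's conclusion reverses. Your own computation then goes through: $R_\alpha(n)/\beta_{n+1}\to 0$ and $R_\beta(n+1)/\beta_{n+1}\to 0$, so for any $a,b>1$ tuned to $A+B=\pi/2$---and such pairs exist, since $a\mapsto\sum a^{-k^3}$ and $b\mapsto\sum b^{-k^2}$ are continuous, strictly decreasing, and onto $(0,\infty)$---condition \eqref{uvjet} actually \emph{holds}. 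Your first proposed remedy therefore cannot succeed. Your second remedy is the right one: for such $a,b$ one has $\alpha_{n+1}/\varepsilon_{n+1}\to 0$, hence $\tan(A_{n+1}+B_{n+1})/\tan(A_n+B_{n+1})\to 1$, which violates \eqref{uvjet_d}. This salvages the intended conclusion that the sequences in item~4 never satisfy \emph{all} the conditions of Lemma~\ref{lema}, but not the literal claim that they fail \eqref{uvjet}; as stated, that part of the lemma is not correct for every $a,b>1$.
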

Then none of these sequences does not satisfy the property \eqref{uvjet}
from the Lemma \ref{lema}.

\begin{proof}
Notice that
\begin{align*}
\lim_{n \to \infty}\frac{\tan(A_n + B_{n+1})}{\tan(A_n + B_n)}=\infty
\end{align*}
implies
\begin{align}\label{limes1}
\lim_{n \to \infty}\frac{\cos(A_n + B_{n+1})}{\cos(A_n + B_n)}=0.
\end{align}

\begin{enumerate}
 \item Set
 \begin{align*}
 \alpha_k = 2^{-k}, \qquad \beta_k = \frac{1}{k(k+1)}.
 \end{align*}
 
 Since $A_n = 1-2^{-n}$, $B_n=\frac{n}{n+1}$ and $B_{n+1}=B_n+\beta_{n+1}$, we have:
 \begin{align*}
 \lim_{n \to \infty}\frac{\cos(A_n + B_{n+1})}{\cos(A_n + B_n)}&=\lim_{n \to \infty}\frac{\frac{d}{dn}(A_n+B_{n+1})}{\frac{d}{dn}(A_n+B_n)}\\
 &=1+\lim_{n \to \infty}\frac{\frac{d}{dn}\beta_{n+1}}{\frac{d}{dn}(A_n+B_n)}\\
 &=1+ \lim_{n \to \infty} \frac{\frac{-2n-3}{(n+1)^2(n+2)^2}}{\ln2\cdot 2^{-n}+(n+1)^{-2}}\\
 &=1+0=1.
 \end{align*}
 
This means that the condition (\ref{limes1}) is not fulfilled.

\item Set
 \begin{align*}
 \alpha_k = a^{-k}, \ \beta_k =  b^{-k},\quad 1<b\leq a.
 \end{align*}
 
 Since $A_n = \frac{1-a^{-n}}{a-1}$, $B_n=\frac{1-b^{-n}}{b-1}$, we have:
 \begin{align*}
 \lim_{n \to \infty}\frac{\cos(A_n + B_{n+1})}{\cos(A_n + B_n)}&=\lim_{n \to \infty}\frac{\frac{d}{dn}(A_n+B_{n+1})}{\frac{d}{dn}(A_n+B_n)}\\
 &=1+ \lim_{n \to \infty}\frac{\frac{d}{dn}\beta_{n+1}}{\frac{d}{dn}(A_n+B_n)}\\
 &=1+ \lim_{n \to \infty} \frac{-\ln b\cdot b^{-(n+1)}}{\frac{\ln a}{a-1}a^{-n}+ \frac{\ln b}{b-1}b^{-n}}\\
 &=1+ \lim_{n \to \infty} \frac{-\ln b\cdot b^{-1}}{\frac{\ln a}{a-1}(\frac{b}{a})^{n}+ \frac{\ln b}{b-1}}.
 \end{align*}
 
If $a=b$, the last expression is equal to

\begin{align*}
 1- \frac{\ln a\cdot a^{-1}}{2\frac{\ln a}{a-1}}&=1-\frac{a-1}{2a}\\
 &=\frac{a+1}{2a}.
\end{align*}

If $b<a$, we obtain

\begin{align*}
 1- \frac{\ln b\cdot b^{-1}}{\frac{\ln b}{b-1}}&=1-\frac{b-1}{b}\\
 &=\frac{1}{b}.
\end{align*}

In conclusion, the condition (\ref{limes1}) is not fulfilled.

\begin{remark}
In the examples that follow, we will proceed in the following way.

We use the equality
\begin{align*}
\frac{\cos(A_n + B_{n+1})}{\cos(A_n + B_n)}
=
\frac{\sin(\frac{\pi}{2}-A_n - B_{n+1})}{\sin(\frac{\pi}{2}-A_n - B_n)},
\end{align*}
and the estimates
\begin{align}
c_1\cdot \frac{\frac{\pi}{2}-A_n - B_{n+1}}{\frac{\pi}{2}-A_n - B_n}\leq\frac{\sin(\frac{\pi}{2}-A_n - B_{n+1})}{\sin(\frac{\pi}{2}-A_n - B_n)}\label{oc_1}\\
\frac{\sin(\frac{\pi}{2}-A_n - B_{n+1})}{\sin(\frac{\pi}{2}-A_n - B_n)}\leq c_2\cdot \frac{\frac{\pi}{2}-A_n - B_{n+1}}{\frac{\pi}{2}-A_n - B_n}\label{oc_2},
\end{align}
for some constants $c_1, c_2>0$.

Examining the lower (or upper) bound in the expression \eqref{oc_1} (or \eqref{oc_2}), we see
\begin{align}
\frac{\frac{\pi}{2}-A_n - B_n-\beta_{n+1}}{\frac{\pi}{2}-A_n - B_n}&=1-\frac{\beta_{n+1}}{\frac{\pi}{2}-A_n - B_n}\notag\\
&=1-\frac{\beta_{n+1}}{\sum_{k=n+1}^\infty \alpha_k + \sum_{k=n+1}^\infty \beta_k}\label{limes2}.
\end{align}

Now, we can use integral test for convergence to estimate series

$$
\sum_{k=n+1}^\infty \alpha_k
$$
and
$$
\sum_{k=n+1}^\infty \beta_k.
$$
\end{remark}
 
 \item Let 
\begin{align*}
 \alpha_k =  \frac{1}{k(k+1)} , \qquad \beta_k = k^{-1-\delta},
\end{align*}
 for some $\delta > 0$.
 In this example we will use integral test.
 
 Notice,
 \begin{align*}
 \sum_{k=n+1}^\infty \alpha_k = \frac{1}{n+1}. 
 \end{align*}
 
The integral test implies
 \begin{align*}
 \int_{n+1}^\infty \beta(x)dx \leq \sum_{k=n+1}^\infty \beta_k \leq \beta_{n+1} + \int_{n+1}^\infty \beta(x)dx,
 \end{align*}
 which is equivalent to
 \begin{align*}
  \sum_{k=n+1}^\infty \alpha_k + \int_{n+1}^\infty \beta(x)dx &\leq  \sum_{k=n+1}^\infty \alpha_k + \sum_{k=n+1}^\infty \beta_k,\notag\\
  \sum_{k=n+1}^\infty \alpha_k + \sum_{k=n+1}^\infty \beta_k &\leq \sum_{k=n+1}^\infty \alpha_k + \beta_{n+1} + \int_{n+1}^\infty \beta(x)dx.
 \end{align*}
 These inequalities yield
 \begin{align*}
  \frac{1}{\sum_{k=n+1}^\infty \alpha_k + \beta_{n+1} + \int_{n+1}^\infty \beta(x)dx} &\leq \frac{1}{\sum_{k=n+1}^\infty \alpha_k + \sum_{k=n+1}^\infty \beta_k},\notag\\
  \frac{1}{\sum_{k=n+1}^\infty \alpha_k + \sum_{k=n+1}^\infty \beta_k} &\leq \frac{1}{\sum_{k=n+1}^\infty \alpha_k + \int_{n+1}^\infty \beta(x)dx}.
 \end{align*}
 
 Multiplying the two inequalities by $\beta_{n+1}$, we obtain the lower and the upper bound for the expression in (\ref{limes2}),
 
 \begin{align*}
 \frac{\beta_{n+1}}{\sum_{k=n+1}^\infty \alpha_k + \beta_{n+1} + \int_{n+1}^\infty \beta(x)dx}
 \leq \frac{\beta_{n+1}}{\sum_{k=n+1}^\infty \alpha_k + \sum_{k=n+1}^\infty \beta_k}
 \end{align*}
 and
 
 \begin{align*}
 \frac{\beta_{n+1}}{\sum_{k=n+1}^\infty \alpha_k + \sum_{k=n+1}^\infty \beta_k}
  \leq \frac{\beta_{n+1}}{\sum_{k=n+1}^\infty \alpha_k + \int_{n+1}^\infty \beta(x)dx}.
 \end{align*}
 
 Since
 \begin{align*}
 \lim_{n \to \infty} \frac{\sum_{k=n+1}^\infty \alpha_k}{\beta_{n+1}}=\infty, 
 \end{align*}
 it follows
 
 \begin{align*}
 \lim_{n \to \infty} \frac{\beta_{n+1}}{\sum_{k=n+1}^\infty \alpha_k + \int_{n+1}^\infty \beta(x)dx}=0
 \end{align*}
 
 and consequently
 \begin{align*}
 \lim_{n \to \infty}\frac{\beta_{n+1}}{\sum_{k=n+1}^\infty \alpha_k + \sum_{k=n+1}^\infty \beta_k}=0.
 \end{align*}
 This means, by \eqref{limes2},
 
 \begin{align*}
 \lim_{n \to \infty} \frac{\frac{\pi}{2}-A_n - B_{n+1}}{\frac{\pi}{2}-A_n -
B_n}=1-0=1,
 \end{align*}
 which implies the condition \eqref{limes1} is not fulfilled.
 
 \item Set
  \begin{align*}
  \alpha_k = 2^{-(k^3)}, \ \ \beta_k = 2^{-(k^2)}.
  \end{align*}
 As in the second example, we use the integral test.
 Similarly, it implies
 \begin{align*}
  &\int_{n+1}^\infty \alpha(x)dx +  \int_{n+1}^\infty \beta(x)dx \leq \sum_{k=n+1}^\infty \alpha_k + \sum_{k=n+1}^\infty \beta_k,\\
  &\sum_{k=n+1}^\infty \alpha_k + \sum_{k=n+1}^\infty \beta_k \leq \alpha_{n+1}+\int_{n+1}^\infty \alpha(x)dx + \beta_{n+1}+\int_{n+1}^\infty \beta(x)dx,
 \end{align*}
 which yields
 \begin{align}
 &\frac{1}{\alpha_{n+1}+\int_{n+1}^\infty \alpha(x)dx + \beta_{n+1}+\int_{n+1}^\infty \beta(x)dx}\notag \\
 &\leq \frac{1}{\sum_{k=n+1}^\infty \alpha_k + \sum_{k=n+1}^\infty \beta_k},\notag\\
 &\frac{1}{\sum_{k=n+1}^\infty \alpha_k + \sum_{k=n+1}^\infty \beta_k} \leq \frac{1}{\int_{n+1}^\infty \alpha(x)dx +  \int_{n+1}^\infty \beta(x)dx}.\label{njd}
 \end{align}

 We will only observe the inequality \eqref{njd}.
 Multiplying (\ref{njd}) by $\beta_{n+1}$, it follows
 \begin{align*}
 \frac{\beta_{n+1}}{\sum_{k=n+1}^\infty \alpha_k + \sum_{k=n+1}^\infty \beta_k}
  \leq \frac{\beta_{n+1}}{\int_{n+1}^\infty \alpha(x)dx +  \int_{n+1}^\infty \beta(x)dx}.
 \end{align*}
 Since
 \begin{align*}
 \frac{\beta_{n+1}}{\int_{n+1}^\infty \alpha(x)dx +  \int_{n+1}^\infty \beta(x)dx} \leq \frac{\beta_{n+1}}{\int_{n+1}^\infty \beta(x)dx},
 \end{align*}
 it is enough to show
 \begin{equation*}\label{uvjet_pr}
 \lim_{n \to \infty}\frac{\beta_{n+1}}{\int_{n+1}^\infty \beta(x)dx}=0.
 \end{equation*}
 
 Notice that then 
 \begin{align*}
 \lim_{n \to \infty} \frac{\beta_{n+1}}{\sum_{k=n+1}^\infty \alpha_k + \sum_{k=n+1}^\infty \beta_k}=0,
 \end{align*}
 which implies the condition \eqref{limes1} is not fulfilled.
 
Using polar coordinates we obtain
 \begin{align*}
 \int_{n+1}^\infty \beta(x)dx &=\int_{n+1}^\infty 2^{-(x^2)}dx\\
 &=\frac{1}{2}\sqrt{\frac{\pi}{\ln2}}\cdot 2^{-\frac{1}{2}(n+1)^2},
 \end{align*}
 which implies
 \begin{align*}
 \lim_{n \to \infty} \frac{\beta_{n+1}}{\int_{n+1}^\infty \beta(x)dx}&=\lim_{n \to \infty}\frac{2^{-(n+1)^2}}{\frac{1}{2}\sqrt{\frac{\pi}{\ln2}}\cdot 2^{-\frac{1}{2}(n+1)^2}}\\
 &=\lim_{n \to \infty}\frac{2^{-\frac{1}{2}(n+1)^2}}{\frac{1}{2}\sqrt{\frac{\pi}{\ln2}}}\\
 &=0,
 \end{align*}
 and that is what we wanted to prove.
 \end{enumerate}
\end{proof}

\begin{remark}
 Notice that only in the very end the explicit formula for $\beta_n$ has been used. This means that if
 \begin{align*}
 \beta_k = b^{-(k^2)},\ b>1,
 \end{align*}
 the same reasoning applies.
 
 In this case, we would have
 \begin{align*}
 \int_{n+1}^\infty \beta(x)dx &=\int_{n+1}^\infty b^{-(x^2)}dx\\
 &=\frac{1}{2}\sqrt{\frac{\pi}{\ln b}}\cdot b^{-\frac{1}{2}(n+1)^2}
 \end{align*}
 and
  \begin{align*}
 \lim_{n \to \infty} \frac{\beta_{n+1}}{\int_{n+1}^\infty \beta(x)dx}&=\lim_{n \to \infty}\frac{b^{-(n+1)^2}}{\frac{1}{2}\sqrt{\frac{\pi}{\ln b}}\cdot b^{-\frac{1}{2}(n+1)^2}}\\
 &=\lim_{n \to \infty}\frac{b^{-\frac{1}{2}(n+1)^2}}{\frac{1}{2}\sqrt{\frac{\pi}{\ln b}}}\\
 &=0.
 \end{align*}
\end{remark}

\begin{lemma}{\label{lema4}}
Let the quantities $\delta_n$, $I_n$, $P_n$, $P_n'$, $\delta_n$, $x_n$, $y_n$, $c_n$ and $d_n$ be as in the Proposition \ref{prop4}. Then:
\begin{enumerate}
 \item $d_n=(\delta_n + x_n)\cdot \tan(A_n+B_{n+1})-1$;
 \item all quantities above are positive and $c_n<d_n$.
\end{enumerate}
\end{lemma}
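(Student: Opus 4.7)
The plan is to set the abbreviations $T_1=\tan(A_n+B_n)$, $T_2=\tan(A_n+B_{n+1})$, $T_3=\tan(A_{n+1}+B_{n+1})$, so that $I_n=2+T_2+T_3$, $P_n=T_3-T_2$, $P_n'=T_2-T_1$ and $H_n=P_n^2(1+T_2)+P_n'I_n(1+T_3)$. Since Lemma \ref{lema} guarantees $A_n+B_n<A_n+B_{n+1}<A_{n+1}+B_{n+1}<\pi/2$ and $\tan$ is positive and strictly increasing on $(0,\pi/2)$, all three quantities $T_1,T_2,T_3$ and hence $I_n,P_n,P_n'$ are positive. This gives $H_n>0$, so $\delta_n>0$, and obviously $x_n>0$ since both summands in its definition are positive.

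For part (1), I would substitute the formula for $x_n$ into $(\delta_n+x_n)T_2-1$, yielding $\delta_n T_2+\tfrac{2\delta_n(1+T_2)P_n}{I_n}$, and substitute the formula for $y_n$ into $d_n=(\delta_n-y_n)T_3-y_n$, yielding $\delta_n T_3-y_n(1+T_3)$. Equating the two sides and using $I_n-2(1+T_2)=P_n$ reduces the desired identity to $\dfrac{\delta_n P_n^2}{I_n}=y_n(1+T_3)$. Plugging in $y_n=(1+T_1-\delta_n P_n')/(1+T_2)$ and cross-multiplying collapses this to $\delta_n H_n=I_n(1+T_1)(1+T_3)$, which is precisely the definition of $\delta_n$. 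So (1) is a design identity: $\delta_n$ was chosen exactly to force $d_n=(\delta_n+x_n)T_2-1$.

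For the remaining positivity, I would verify $y_n>0$ by showing $\delta_n P_n'<1+T_1$; substituting $\delta_n$ this inequality reduces, after clearing $H_n$, to $0<P_n^2(1+T_2)$, which is immediate. Then $c_n=(\delta_n+y_n)T_2+y_n>0$. To prove $c_n<d_n$ directly from the definitions, observe that
\begin{equation*}
d_n-c_n=\delta_n(T_3-T_2)-y_n(T_3+T_2+2)=\delta_n P_n-y_n I_n,
\end{equation*}
so the inequality becomes $\delta_n P_n(1+T_2)+\delta_n P_n'I_n>(1+T_1)I_n$, i.e.\ $\delta_n[P_n(1+T_2)+P_n'I_n]>(1+T_1)I_n$. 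Substituting the formula for $\delta_n$ and cancelling $(1+T_1)I_n$, this becomes $(1+T_3)[P_n(1+T_2)+P_n'I_n]>P_n^2(1+T_2)+P_n'I_n(1+T_3)$, which after cancellation collapses to $P_n(1+T_2)(1+T_3)>P_n^2(1+T_2)$, i.e.\ $1+T_3>P_n=T_3-T_2$, equivalent to $1+T_2>0$. Finally, $d_n>c_n>0$.

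The only mildly obstacle is bookkeeping: both identities in (1) and the strict inequality $c_n<d_n$ look daunting with the explicit expressions, but each one collapses after one cross-multiplication because the definition of $\delta_n$ was engineered to turn the alignment $(\delta_n+x_n)T_2-1=(\delta_n-y_n)T_3-y_n$ into an exact equality, and the gap $d_n-c_n$ into a positive multiple of $P_n^2(1+T_2)$. I expect all steps to be routine algebra once the abbreviations $T_1,T_2,T_3$ are in place.
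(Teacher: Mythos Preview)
Your proposal is correct and follows essentially the same algebraic route as the paper: both proofs hinge on the single design identity $\delta_n H_n = I_n(1+T_1)(1+T_3)$ for part~(1), and on the observation that $y_n>0$ reduces to $P_n^2(1+T_2)>0$ while $d_n-c_n=\delta_n P_n-y_nI_n>0$ reduces to $1+T_2>0$. The only difference is presentational: the paper derives forward from the definition of $\delta_n$ without abbreviating the tangents, whereas you introduce $T_1,T_2,T_3$ and reduce backward to the defining equation, which makes the bookkeeping considerably cleaner.
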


\begin{proof}
\begin{enumerate}
\item
By the definition \eqref{def_del_n},
\begin{align*}
&\delta_n=\frac{I_n(1+\tan(A_n+B_n))\cdot(1+\tan(A_{n+1}+B_{n+1}))}{P_n^2(1+\tan(A_n+B_{n+1}))+P_n'I_n(1+\tan(A_{n+1}+B_{n+1}))}.
\end{align*}
This expression is equivalent to
\begin{align*}
 &I_n(1+\tan(A_n+B_n))\cdot(1+\tan(A_{n+1}+B_{n+1}))\\
 &=\delta_n P_n'I_n\cdot(1+\tan(A_{n+1}+B_{n+1}))+\delta_n P_n^2\cdot(1+\tan(A_n+B_{n+1})),
\end{align*}
from where
\begin{align*}
 &I_n(1+\tan(A_n+B_n))\cdot(1+\tan(A_{n+1}+B_{n+1}))\\
 &=\delta_n P_n'I_n\cdot(1+\tan(A_{n+1}+B_{n+1}))\\
 &+\delta_nP_n(1+\tan(A_n+B_{n+1}))\cdot(I_n-2(1+\tan(A_n+B_{n+1})))
\end{align*}
follows.
Therefore,
\begin{align*}
 &2\delta_n P_n\cdot(1+\tan(A_n+B_{n+1}))^2 \\
 &=\delta_n P_n I_n\cdot(1+\tan(A_n+B_{n+1}))\\
 &+I_n(1+\tan(A_{n+1}+B_{n+1}))\cdot (\delta_n P_n'-\tan(A_n+B_n)-1),
\end{align*}
and eventually
\begin{align*}
 &2\delta_n\frac{P_n}{I_n}\cdot(1+\tan(A_n+B_{n+1}))=\\
 &=\delta_n P_n +\frac{1+\tan(A_{n+1}+B_{n+1})}{1+\tan(A_n+B_{n+1})}\cdot (\delta_n P_n'-\tan(A_n+B_n)-1).
\end{align*}
By the definition of $x_n$ \eqref{def_x_n}, the last equality yields
\begin{align*}
&x_n\cdot\tan(A_n+B_{n+1})-1=\\
 &=\delta_n P_n +\frac{1+\tan(A_{n+1}+B_{n+1})}{1+\tan(A_n+B_{n+1})}\cdot (\delta_nP_n'-\tan(A_n+B_n)-1).
\end{align*}
Therefore,
\begin{align*}
 \frac{1+\tan(A_n+B_n)-\delta_n P_n'}{1+\tan(A_n+B_{n+1})}=\frac{\delta_n P_n-x_n\cdot\tan(A_n+B_{n+1})+1}{1+\tan(A_{n+1}+B_{n+1})}.
\end{align*}
Notice that by \eqref{def_y_n} the left-hand side is $y_n$, hence
\begin{align*}
 y_n=\frac{\delta_n P_n-x_n\cdot\tan(A_n+B_{n+1})+1}{1+\tan(A_{n+1}+B_{n+1})}.
\end{align*}
From here, by the definition of $P_n$ \eqref{def_P_n},
\begin{align*}
&(\delta_n-y_n)\cdot\tan(A_{n+1}+B_{n+1})-y_n\\
&=(\delta_n + x_n)\cdot \tan(A_n+B_{n+1})-1.
\end{align*}
Consequently, by the definition of $d_n$ \eqref{def_d_n}, the last equality implies
 \begin{align*}
 d_n=(\delta_n + x_n)\cdot \tan(A_n+B_{n+1})-1,
 \end{align*}
and hence the first part of the Lemma.

\item
Using the definition of $\delta_n$ \eqref{def_del_n} one sees that $y_n>0$. Namely, the nominator of $y_n$ is positive if and only if 
\begin{align*}
 1+\tan(A_n+B_n)-\delta_n\cdot P_n'>0,
\end{align*}
which is equivalent to
\begin{align*}
 \frac{1+\tan(A_n+B_n)}{P_n'}>\delta_n.
\end{align*}
By the definition of $\delta_n$, the condition above is fulfilled if and only if
\begin{align*}
  &\frac{1+\tan(A_n+B_n)}{P_n'}\\
  &>\frac{I_n\cdot (1+\tan(A_n+B_n))\cdot (1+\tan(A_{n+1}+B_{n+1}))}{H_n},
\end{align*}
which is equivalent to
\begin{align*}
 H_n>P_n'\cdot I_n\cdot (1+\tan(A_{n+1}+B_{n+1})).
\end{align*}
Using \eqref{def_H_n}, we see that it holds.

Consequently, $c_n$ is also positive.

Next we show that $c_n<d_n$, which also implies that $d_n>0$.
Since
\begin{align*}
 I_n>0,
\end{align*}
by the definition of $P_n$ \eqref{def_P_n}, the inequality
\begin{align*}
 2\cdot (1+\tan(A_{n+1}+B_{n+1}))>P_n
\end{align*}
holds.

Therefore,
\begin{align*}
&2\cdot P_n (1+\tan(A_n+B_{n+1}))\cdot(1+\tan(A_{n+1}+B_{n+1}))\\
&>P_n^2(1+\tan(A_n+B_{n+1})).
\end{align*}

Using the expression for $H_n$ \eqref{def_H_n},
\begin{align*}
 &2\cdot P_n (1+\tan(A_n+B_{n+1}))\cdot(1+\tan(A_{n+1}+B_{n+1}))\\
 &+P_n'I_n\cdot(1+\tan(A_{n+1}+B_{n+1}))> H_n,
\end{align*}
which, by the definition of $\delta_n$ \eqref{def_del_n} yields
\begin{align*}
 \delta_n>\frac{I_n\cdot(1+\tan(A_n+B_n))}{2\cdot P_n (1+\tan(A_n+B_{n+1}))+P_n'I_n}.
\end{align*}

Therefore,
\begin{align*}
 2\delta_n\cdot P_n (1+\tan(A_n+B_{n+1}))> (1+\tan(A_n+B_n)-\delta_n P_n')\cdot I_n,
\end{align*}
which implies
\begin{align*}
 2\delta_n\cdot\frac{1+\tan(A_n+B_{n+1})}{\tan(A_n+B_{n+1})}\cdot \frac{P_n}{I_n}>\frac{1+\tan(A_n+B_n)-\delta_n P_n'}{\tan(A_n+B_{n+1})}.
\end{align*}

Consequently, by the definition of $x_n$ \eqref{def_x_n} 
\begin{align*}
x_n>\frac{2+\tan(A_n+B_n)-\delta_n P_n'}{\tan(A_n+B_{n+1})}.
\end{align*}

Hence, using the definition for $y_n$ \eqref{def_y_n}, 
\begin{align*}
 \frac{x_n\cdot\tan(A_n+B_{n+1})-1}{1+\tan(A_n+B_{n+1})}>y_n.
\end{align*}

Therefore,
\begin{align*}
(\delta_n + x_n)\cdot \tan(A_n+B_{n+1})-1> (\delta_n + y_n)\cdot\tan(A_n+B_{n+1})+y_n.
\end{align*}

By the first part of the Lemma and the definition of $c_n$ \eqref{def_c_n}, the equality
\begin{align*}
d_n>c_n,
\end{align*}
holds.

Hence $d_n>0$ and the proof is finished.
\end{enumerate}

\end{proof}

\begin{lemma}{\label{lema5}}
Let $I_n$, $J_n$, $P_n$, $P_n'$, $\delta_n$, $x_n$, $y_n$, $c_n$, $d_n$, $U_n$, $\delta_n'$, $S_n=(x_{S_n}, y_{S_n})$ and $r_n$ be as in the Proposition \ref{prop5}. Then:
\begin{enumerate}
 \item $B(S_n, r_n)\subset Q_n:=(\delta_n', \delta_n)\times (c_n,c_n+(\delta_n-\delta_n'))$;
 \item the inequalities
\begin{align*}
 c_n &\geq(\varepsilon+1)\cdot\tan(A_n+B_n)+1,\\
 c_n+(\delta_n-\delta_n') &\leq(\varepsilon+x_n)\cdot\tan(A_n+B_{n+1})-1,
\end{align*}
 hold true, for some $\varepsilon>0$ such that $\delta_n'<\varepsilon<\delta_n$.
\end{enumerate}
\end{lemma}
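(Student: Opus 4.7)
The strategy is to verify both parts by direct algebraic computation, exploiting the fact that the formulas for $S_n$, $r_n$, $\delta_n'$ are tuned precisely so that $B(S_n,r_n)$ is the ball inscribed in the square $Q_n$ of side $\delta_n-\delta_n'$.

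For part (1), I first verify the key identity $\delta_n-\delta_n'=2r_n$ by subtracting the definition \eqref{def_del_n_c} of $\delta_n'$ from $\delta_n$; the numerator simplifies to $\delta_n P_n - y_n I_n$, so $\delta_n-\delta_n'=2r_n$ by \eqref{def_r_n}. It then suffices to show $x_{S_n}+r_n=\delta_n$ and $y_{S_n}-r_n=c_n$, since together with $\delta_n-\delta_n'=2r_n$ these give all four boundary bounds for $B(S_n,r_n)\subset Q_n$ (in fact $(x_{S_n},y_{S_n})$ is the center of $Q_n$). The equality $x_{S_n}+r_n=\delta_n$ follows from the tangent identity $I_n+P_n=2(1+\tan(A_{n+1}+B_{n+1}))$; the equality $y_{S_n}-r_n=c_n$ follows from the identities $J_n-P_n=2\tan(A_n+B_{n+1})(1+\tan(A_{n+1}+B_{n+1}))$ and $J_n+I_n=2(1+\tan(A_n+B_{n+1}))(1+\tan(A_{n+1}+B_{n+1}))$, combined with the expression $c_n=(\delta_n+y_n)\tan(A_n+B_{n+1})+y_n$ from \eqref{def_c_n}. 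All three tangent identities follow immediately from the definitions \eqref{def_I_n}, \eqref{def_P_n}, \eqref{def_J_n}.

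For part (2), I claim that any $\varepsilon\in(\delta_n',\delta_n)$ satisfies both inequalities. The first one is immediate from the identity $c_n=(\delta_n+1)\tan(A_n+B_n)+1$ already shown in the proof of Proposition \ref{prop4}, which yields $c_n-\bigl[(\varepsilon+1)\tan(A_n+B_n)+1\bigr]=(\delta_n-\varepsilon)\tan(A_n+B_n)>0$. For the second inequality, substituting $d_n=(\delta_n+x_n)\tan(A_n+B_{n+1})-1$ from Lemma \ref{lema4}(1) transforms it into $(\delta_n-\varepsilon)\tan(A_n+B_{n+1})\leq d_n-c_n-(\delta_n-\delta_n')$. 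A direct computation from \eqref{def_c_n} and \eqref{def_d_n} gives $d_n-c_n=\delta_n P_n-y_n I_n=2r_n(1+\tan(A_{n+1}+B_{n+1}))$, so the right-hand side equals $2r_n\tan(A_{n+1}+B_{n+1})$. Since $\varepsilon>\delta_n'$ yields $\delta_n-\varepsilon<2r_n$, and $\tan(A_n+B_{n+1})<\tan(A_{n+1}+B_{n+1})$ (both arguments lie in $(0,\pi/2)$ by Lemma \ref{lema}), the claim follows.

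The main obstacle is purely bookkeeping: the three tangent identities for $I_n$, $J_n$, $P_n$ have to be verified by expanding the definitions, and one needs to keep track of the common denominator $1+\tan(A_{n+1}+B_{n+1})$ that appears throughout the formulas for $x_{S_n}$, $y_{S_n}$, $r_n$, $\delta_n'$. No further geometric insight is required; once the tangent identities are in place the whole argument reduces to rewriting rational combinations of the tangents.
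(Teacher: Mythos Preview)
Your proposal is correct and follows essentially the same approach as the paper. For part (1) the paper verifies all four boundary equalities $x_{S_n}\pm r_n$, $y_{S_n}\pm r_n$ directly, whereas you first isolate the identity $\delta_n-\delta_n'=2r_n$ and then check only two of them; the underlying tangent identities are identical. For part (2) the paper handles the second inequality by reducing to $\varepsilon=\delta_n'$ via the relations \eqref{del} and \eqref{ips}, while you rewrite it through $d_n-c_n=2r_n(1+\tan(A_{n+1}+B_{n+1}))$ and compare with $\delta_n-\delta_n'=2r_n$; this is a slightly cleaner bookkeeping of the same computation, not a different idea.
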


\begin{proof}
\begin{enumerate}
\item Let $z=(z_1, z_2)\in B(S_n, r_n)$. Clearly,
\begin{align*}
|z_1-x_{S_n}|<r_n,\\
|z_2-y_{S_n}|<r_n.
\end{align*}

If we show:
\begin{align}\label{prvo}
x_{S_n}-r_n=\delta_n'
\end{align}
\begin{align}\label{drugo}
x_{S_n}+r_n=\delta_n
\end{align}
\begin{align}\label{trece}
y_{S_n}-r_n=c_n
\end{align}
\begin{align}\label{cetvrto}
y_{S_n}+r_n=c_n+(\delta_n-\delta_n'),
\end{align}
then the proof of the first part of the Lemma is finished.

Let us first check \eqref{prvo}. The definitions of $x_{S_n}$ \eqref{def_x_S}, $r_n$ \eqref{def_r_n}, $I_n$ \eqref{def_I_n}, $P_n$ \eqref{def_P_n} and $\delta_n'$ \eqref{def_del_n_c} imply
\begin{align*}
 x_{S_n}-r_n&=\frac{\delta_n+y_n}{2}\cdot\frac{I_n}{1+\tan(A_{n+1}+B_{n+1})}\\
&\text{\;\;\;\;}-\frac{1}{2}\cdot\frac{\delta_n  P_n-y_n  I_n}{1+\tan(A_{n+1}+B_{n+1})}\\
 &=\frac{\delta_n(I_n-P_n)+2y_nI_n}{2(1+\tan(A_{n+1}+B_{n+1}))}\\
 &=\frac{\delta_n(1+\tan(A_n+B_{n+1}))+y_nI_n}{1+\tan(A_{n+1}+B_{n+1})}\\
 &=\delta_n'.
\end{align*}
Let us show \eqref{drugo}. Similarly, the definitions of $x_{S_n}$ \eqref{def_x_S}, $r_n$ \eqref{def_r_n}, $I_n$ \eqref{def_I_n}, $P_n$ \eqref{def_P_n} and $\delta_n$ \eqref{def_del_n} imply
\begin{align*}
 x_{S_n}+r_n&=\frac{\delta_n+y_n}{2}\cdot\frac{I_n}{1+\tan(A_{n+1}+B_{n+1})}\\
&\text{\;\;\;\;}+\frac{1}{2}\cdot\frac{\delta_n  P_n-y_n  I_n}{1+\tan(A_{n+1}+B_{n+1})}\\
 &=\frac{\delta_n(I_n+P_n)}{2(1+\tan(A_{n+1}+B_{n+1}))}\\
 &=\frac{2\delta_n(1+\tan(A_{n+1}+B_{n+1}))}{2(1+\tan(A_{n+1}+B_{n+1}))}\\
 &=\delta_n.
\end{align*}
Now, we check \eqref{trece}. Likewise, the definitions of $y_{S_n}$ \eqref{def_y_S}, $r_n$ \eqref{def_r_n}, $I_n$ \eqref{def_I_n}, $J_n$ \eqref{def_J_n}, $P_n$ \eqref{def_P_n} and $c_n$ \eqref{def_c_n}  yield
\begin{align*}
 y_{S_n}-r_n&=\frac{\delta_n+y_n}{2}\cdot\frac{J_n}{1+\tan(A_{n+1}+B_{n+1})}\\
&\text{\;\;\;\;}-\frac{1}{2}\cdot\frac{\delta_n P_n-y_n I_n}{1+\tan(A_{n+1}+B_{n+1})}\\
 &=\frac{\delta_n(J_n-P_n)+y_n(I_n+J_n)}{2(1+\tan(A_{n+1}+B_{n+1}))}\\
 &=\delta_n\tan(A_n+B_{n+1})+y_n\tan(A_n+B_{n+1})+y_n\\
 &=(\delta_n+y_n)\cdot\tan(A_n+B_{n+1})+y_n\\
 &=c_n.
\end{align*}
Lastly, we check \eqref{cetvrto}. The definitions of $y_{S_n}$ \eqref{def_y_S}, $r_n$ \eqref{def_r_n}, $I_n$ \eqref{def_I_n}, $J_n$ \eqref{def_J_n} and $P_n$ \eqref{def_P_n} imply
\begin{align*}
 y_{S_n}+r_n&=\frac{\delta_n+y_n}{2}\cdot\frac{J_n}{1+\tan(A_{n+1}+B_{n+1})}\\
&\text{\;\;\;\;}+\frac{1}{2}\cdot\frac{\delta_n\cdot P_n-y_n\cdot I_n}{1+\tan(A_{n+1}+B_{n+1})}\\
 &=\frac{\delta_n\tan(A_{n+1}+B_{n+1})(1+\tan(A_n+B_{n+1}))}{1+\tan(A_{n+1}+B_{n+1})}\\
 &+\frac{y_n(\tan(A_n+B_{n+1})\tan(A_{n+1}+B_{n+1})-1)}{1+\tan(A_{n+1}+B_{n+1})}\\
 &=\frac{\delta_n\tan(A_{n+1}+B_{n+1})(1+\tan(A_n+B_{n+1}))}{1+\tan(A_{n+1}+B_{n+1})}\\
 &+\frac{y_n((1+\tan(A_n+B_{n+1}))(1+\tan(A_{n+1}+B_{n+1}))-I_n)}{1+\tan(A_{n+1}+B_{n+1})}\\
 &=\frac{(\delta_n+y_n)(1+\tan(A_n+B_{n+1}))(1+\tan(A_{n+1}+B_{n+1}))}{1+\tan(A_{n+1}+B_{n+1})}\\
 &+\frac{-\delta_n(1+\tan(A_n+B_{n+1}))-y_nI_n}{1+\tan(A_{n+1}+B_{n+1})}\\
 &=(\delta_n+y_n)(1+\tan(A_n+B_{n+1}))\\
 &+\frac{-\delta_n(1+\tan(A_n+B_{n+1}))-y_nI_n}{1+\tan(A_{n+1}+B_{n+1})}\\
 &=c_n+(\delta_n-\delta_n'),
\end{align*}
where in the last equality we use the definitions of $c_n$ \eqref{def_c_n} and $\delta_n'$ \eqref{def_del_n_c}. 
\item Recall,
\begin{align*}
\Delta_{n}^{\varepsilon}=\{(y_1,y_2): &y_1=\varepsilon,
(\varepsilon+1)\cdot\tan(A_n+B_n)+1<\\
&<y_2<(\varepsilon+x_n)\cdot\tan(A_n+B_{n+1})-1\}.
\end{align*}

We start by proving the inequality
\begin{align*}
(\varepsilon+1)\cdot\tan(A_n+B_n)+1\leq c_n,
\end{align*}
for some $\delta_n'<\varepsilon<\delta_n$.

Let $\delta_n'<\varepsilon<\delta_n$. 
Then the inequality
\begin{align*}
(\varepsilon+1)\cdot\tan(A_n+B_n)+1\leq (\delta_n+1) \tan(A_n+B_n)+1
\end{align*}
holds.
Therefore, the definition of $P_n'$ \eqref{def_P_n'} yields
\begin{align*}
&(\varepsilon+1)\cdot\tan(A_n+B_n)+1\\
&\leq \delta_n\cdot\tan(A_n+B_{n+1})+(1+\tan(A_n+B_n)-\delta_n P_n').
\end{align*}

The definition of $y_n$ \eqref{def_y_n} implies
\begin{align*}
(\varepsilon+1)\cdot\tan(A_n+B_n)+1\leq (\delta_n + y_n)\cdot\tan(A_n+B_{n+1})+y_n.
\end{align*}

Eventually, the definition of $c_n$ \eqref{def_c_n} implies
\begin{align*}
(\varepsilon+1)\cdot\tan(A_n+B_n)+1\leq c_n,
\end{align*}
and hence the inequality.

In order to prove the inequality
\begin{align*}
c_n+(\delta_n-\delta_n')\leq(\varepsilon+x_n)\cdot\tan(A_n+B_{n+1})-1,
\end{align*}
it is enough to show it for $\varepsilon=\delta_n'$.

We also use two relations:
\begin{align}
 &\delta_n-\delta_n'=\delta_n'\cdot\tan(A_{n+1}+B_{n+1})-\delta_n\cdot\tan(A_n+B_{n+1})-y_n\cdot I_n,\label{del}\\
 &y_n=\frac{\delta_n P_n-x_n\tan(A_n+B_{n+1})+1}{1+\tan(A_{n+1}+B_{n+1})}.\label{ips}
\end{align}
Notice that these statements follow directly from the definition of $\delta_n'$ \eqref{def_del_n_c} and the equality
\begin{align*}
d_n=(\delta_n+x_n)\cdot\tan(A_n+B_{n+1})-1
\end{align*}
from the Lemma \ref{lema4}, respectively.

Due to
\begin{align*}
&\delta_n'<\delta_n,
\end{align*}
and the definition of $P_n$ \eqref{def_P_n}, the inequality
\begin{align*}
&-\delta_n P_n + x_n\cdot\tan(A_n+B_{n+1})-1+\delta_n'\cdot\tan(A_{n+1}+B_{n+1})\\
& \leq(\delta_n'+x_n)\cdot\tan(A_n+B_{n+1})-1
\end{align*}
holds.
Therefore, the equality \eqref{ips} implies
\begin{align*}
&\delta_n'\cdot\tan(A_{n+1}+B_{n+1})-y_n\cdot(1+\tan(A_{n+1}+B_{n+1}))\\
 &\leq (\delta_n'+x_n)\cdot\tan(A_n+B_{n+1})-1.
\end{align*}
From here, using \eqref{del}, the inequality
\begin{align*}
&(\delta_n + y_n)\cdot\tan(A_n+B_{n+1})+y_n+(\delta_n-\delta_n')\\
&\leq(\delta_n'+x_n)\cdot\tan(A_n+B_{n+1})-1
\end{align*}
follows.
The definition of $c_n$ \eqref{def_c_n} yields,
\begin{align*}
&(\delta_n + y_n)\cdot\tan(A_n+B_{n+1})+y_n+(\delta_n-\delta_n')\\
 &\leq(\delta_n'+x_n)\cdot\tan(A_n+B_{n+1})-1,
\end{align*}
and the inequality is proved.

\end{enumerate}
\end{proof}

\begin{lemma}\label{har1}
Let $I_n$, $P_n$, $P_n'$, $\delta_n$, $x_n$, $y_n$, $c_n$, $d_n$, $U_n$, $\delta_n'$, $S_n=(x_{S_n}, y_{S_n})$, $r_n$, $y_{P_1}$ and $ y_{P_2}$ be as in the Proposition \ref{prop5}. Then
\begin{enumerate}
 \item $y_{P_1}\leq c_n$,
 \item $c_n + (\delta_n-\delta_n')\leq y_{P_2}$.
\end{enumerate}
\end{lemma}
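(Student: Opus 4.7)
The plan is a direct algebraic verification that splits cleanly along the two items of the lemma.

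For part (1), I would unpack the definitions \eqref{def_c_n} and \eqref{y_P_1} and simply compute the difference
\begin{align*}
c_n - y_{P_1} = (\delta_n - \varepsilon)\tan(A_n + B_{n+1}).
\end{align*}
Since $\varepsilon < \delta_n$ by the choice in Proposition \ref{prop5} and $A_n + B_{n+1} \in (0, \pi/2)$, the right-hand side is nonnegative, yielding $y_{P_1} \leq c_n$. This is essentially a one-line argument.

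Part (2) is the substantive step. Because $y_{P_2}(\varepsilon) = (\varepsilon - y_n)\tan(A_{n+1}+B_{n+1}) - y_n$ is strictly increasing in $\varepsilon$, it suffices to verify $c_n + (\delta_n - \delta_n') \leq y_{P_2}$ at the worst case $\varepsilon = \delta_n'$. The key observation is that the defining relation \eqref{def_del_n_c}, namely
\begin{align*}
\delta_n'\bigl(1 + \tan(A_{n+1}+B_{n+1})\bigr) = \delta_n\bigl(1 + \tan(A_n + B_{n+1})\bigr) + y_n I_n,
\end{align*}
together with the expansion $I_n = 2 + \tan(A_n + B_{n+1}) + \tan(A_{n+1}+B_{n+1})$ from \eqref{def_I_n}, is precisely what forces the inequality to hold with equality at $\varepsilon = \delta_n'$. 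I would substitute this identity into
\begin{align*}
y_{P_2}(\delta_n') = \delta_n'\tan(A_{n+1}+B_{n+1}) - y_n\bigl(1 + \tan(A_{n+1}+B_{n+1})\bigr),
\end{align*}
let the $y_n\tan(A_{n+1}+B_{n+1})$ contributions cancel, and collect the remaining terms. After a short rearrangement the right-hand side should reduce to $(\delta_n + y_n)\tan(A_n + B_{n+1}) + 2y_n + \delta_n - \delta_n'$, which is exactly $c_n + (\delta_n - \delta_n')$ by \eqref{def_c_n}.

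The only obstacle is algebraic bookkeeping — in particular, distinguishing the two tangents $\tan(A_n + B_{n+1})$ and $\tan(A_{n+1}+B_{n+1})$ as one expands $I_n$, and ensuring that the intended cancellations occur. Conceptually there is nothing hidden: the formula \eqref{def_del_n_c} for $\delta_n'$ was engineered so that the lower tip of the bi-cone intersection $\tilde S_{n+1}^{\delta_n'}(-y_n,-y_n)\cap \tilde S_{n+1}^{\delta_n'}(y_n,y_n)$ meets the top of $Q_n$ precisely at the level $\varepsilon = \delta_n'$, so enlarging $\varepsilon$ up to the value used in Proposition \ref{prop5} promotes the borderline equality to the claimed non-strict inequality.
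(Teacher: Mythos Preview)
Your approach is correct and coincides with the paper's: part (1) follows from $\varepsilon<\delta_n$ exactly as you compute, and for part (2) the paper likewise reduces to the worst case $\varepsilon=\delta_n'$ and uses the defining relation for $\delta_n'$ (in the equivalent form \eqref{del}) to obtain equality there. One small slip to fix: the rearranged expression should read $(\delta_n+y_n)\tan(A_n+B_{n+1})+y_n+\delta_n-\delta_n'$ (a single $y_n$, not $2y_n$), which then equals $c_n+(\delta_n-\delta_n')$ by \eqref{def_c_n}.
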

\begin{proof}

\begin{enumerate}
\item The inequality
\begin{align*}
\varepsilon<\delta_n,
\end{align*}
implies
\begin{align*}
(\varepsilon + y_n)\cdot \tan(A_n+B_{n+1})+y_n \leq (\delta_n + y_n)\cdot\tan(A_n+B_{n+1})+y_n.
\end{align*}
Therefore, together with \eqref{y_P_1} and \eqref{def_c_n}, the inequality
\begin{align*}
y_{P_1}\leq c_n
\end{align*}
follows.

\item To prove the second inequality, we use $\delta_n'<\varepsilon$ and \eqref{del}.

Namely,
\begin{align*}
\delta_n'\leq \varepsilon
\end{align*}
implies
\begin{align*}
&\delta_n'\tan(A_{n+1}+B_{n+1}))-y_n(1+\tan(A_{n+1}+B_{n+1}))\\
&\leq \varepsilon\cdot\tan(A_{n+1}+B_{n+1})-y_n(1+\tan(A_{n+1}+B_{n+1})).
\end{align*}
From here we obtain
\begin{align*}
&(\delta_n + y_n)\cdot\tan(A_n+B_{n+1})+y_n +\\
&+\delta_n'\cdot\tan(A_{n+1}+B_{n+1})-\delta_n\cdot \tan(A_n+B_{n+1})-y_n\cdot I_n\leq\\
&\leq (\varepsilon-y_n)\cdot\tan(A_{n+1}+B_{n+1})-y_n
\end{align*}
Using \eqref{y_P_2}, \eqref{def_c_n} and \eqref{del} it follows
\begin{align*}
c_n + (\delta_n-\delta_n')\leq y_{P_2},
\end{align*}
and hence the Lemma.

\end{enumerate}
\end{proof}

\begin{lemma}\label{har}
Let $I_n$, $P_n$, $P_n'$, $\delta_n$, $y_n$, $b_n$ and $M$ be as in the proof of Theorem \ref{har_thm_mat}, i.e. from Proposition \ref{prop4}.
Then:
\begin{align*}
\limsup_{n\rightarrow \infty} \frac{M}{y_n}<\infty.
\end{align*}
\end{lemma}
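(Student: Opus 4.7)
The plan is to reduce the ratio $M/y_n$, with $M = \delta_n + x_n$ as in \eqref{limsup_alt}, to elementary expressions in $T_0 = \tan(A_n + B_n)$, $T_1 = \tan(A_n + B_{n+1})$ and $T_2 = \tan(A_{n+1} + B_{n+1})$. In this notation $I_n = 2 + T_1 + T_2$, $P_n = T_2 - T_1$, $P_n' = T_1 - T_0$, and $H_n = P_n^2(1+T_1) + P_n' I_n(1+T_2)$. Conditions \eqref{uvjet} and \eqref{uvjet_d} translate into $T_1/T_0 \to \infty$ and $T_2 \geq K T_1$ with $K = (c+2)/(c-1) > 1$; in particular $T_0, T_1, T_2 \to \infty$.

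The decisive step is a closed-form expression for $y_n$. Substituting $\delta_n = I_n(1+T_0)(1+T_2)/H_n$ into $y_n = (1 + T_0 - \delta_n P_n')/(1+T_1)$ and exploiting the identity $H_n - I_n P_n'(1+T_2) = P_n^2(1+T_1)$ yields
\begin{equation*}
y_n = \frac{(1+T_0)\, P_n^2}{H_n}, \qquad \frac{y_n}{\delta_n} = \frac{P_n^2}{I_n(1+T_2)}.
\end{equation*}
From $T_2 \geq K T_1$ we get $P_n \geq (1 - 1/K) T_2$, while $I_n \leq 3 T_2$ and $1 + T_2 \leq 2 T_2$ once $T_2 \geq 1$. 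Hence $y_n/\delta_n \geq (1 - 1/K)^2/6$ for $n$ large, so $y_n$ is bounded below by a fixed positive multiple of $\delta_n$.

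Next I would estimate $x_n$. Formula \eqref{def_x_n} reads $x_n = 2\delta_n \frac{(1+T_1) P_n}{T_1 I_n} + \frac{1}{T_1}$, and since $P_n/I_n < 1$ and $(1+T_1)/T_1 \leq 2$ for $T_1 \geq 1$, the coefficient of $\delta_n$ is bounded by an absolute constant $C_1$. Therefore
\begin{equation*}
\frac{\delta_n + x_n}{y_n} \leq (1 + C_1)\cdot\frac{\delta_n}{y_n} + \frac{1}{T_1\, y_n}.
\end{equation*}
The first term is bounded by the previous paragraph. For the second, the closed form yields
\begin{equation*}
T_1 y_n = \frac{T_1(1+T_0) P_n^2}{P_n^2(1+T_1) + P_n' I_n(1+T_2)},
\end{equation*}
and since $P_n' I_n(1+T_2)/P_n^2 \sim T_1$, we find $T_1 y_n \gtrsim 1+T_0 \to \infty$. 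Hence $1/(T_1 y_n) \to 0$ and $\limsup_{n\to\infty} M/y_n < \infty$.

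The main obstacle is spotting the algebraic identity in the second paragraph: the value of $\delta_n$ fixed in Proposition \ref{prop4} is engineered precisely so that $1 + T_0 - \delta_n P_n'$ collapses to $(1+T_0)P_n^2(1+T_1)/H_n$, reducing $y_n/\delta_n$ to the transparent ratio $P_n^2/(I_n(1+T_2))$. Once this observation is made, everything else is elementary asymptotic analysis driven by the two conditions $T_1/T_0 \to \infty$ and $T_2 \geq K T_1$ coming from Lemma \ref{lema}.
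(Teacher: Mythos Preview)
Your proof is correct and follows essentially the same route as the paper. Both arguments hinge on the algebraic identity $y_n = (1+T_0)P_n^2/H_n$, which gives the clean ratio $\delta_n/y_n = I_n(1+T_2)/P_n^2$; the paper then writes $x_n/y_n$ explicitly as a sum $R_1+R_2+R_3$ and bounds each summand, whereas you group the terms as $(1+C_1)\delta_n/y_n + 1/(T_1 y_n)$, but this is only a cosmetic difference --- your $1/(T_1 y_n)$ is exactly the paper's $R_2+R_3$, and both treatments use condition \eqref{uvjet_d} (via $P_n\geq(1-1/K)T_2$) to control the main terms and $T_0\to\infty$ to kill the remainder.
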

\begin{proof}
By the definition of $\delta_n$ \eqref{def_del_n}, it follows:
\begin{align}
 y_n&=\frac{1+\tan(A_n+B_n)-\delta_n\cdot P_n'}{1+\tan(A_n+B_{n+1})}\notag\\
 &=\frac{P_n^2\cdot(1+\tan(A_n+B_n))}{H_n},\label{y_n_g}
\end{align}
where $H_n$ is like in the proof of the Proposition \ref{prop4}.

From here and \eqref{def_del_n} it follows
\begin{align}\label{komb1}
 &\frac{\delta_n}{y_n}=\frac{ I_n\cdot(1+\tan(A_{n+1}+B_{n+1}))}{P_n^2}.
\end{align}

Also, by \eqref{y_n_g} and the definitions of $\delta_n$ \eqref{def_del_n} and $x_n$ \eqref{def_x_n}, it follows
\begin{align}\label{komb2}
 \frac{x_n}{y_n}=R_1+R_2+R_3,
\end{align}
 where
\begin{align*}
 R_1&=2\cdot\frac{1+\tan(A_n+B_{n+1})}{\tan(A_n+B_{n+1})}\cdot\frac{1+\tan(A_{n+1}+B_{n+1})}{P_n},\\
 R_2&=\frac{1+\tan(A_n+B_{n+1})}{(1+\tan(A_n+B_n))\cdot\tan(A_n+B_{n+1})},\\
 R_3&=\frac{I_n\cdot(1+\tan(A_{n+1}+B_{n+1}))}{P_n^2} \frac{P_n'}{\tan(A_n+B_{n+1})}\frac{1}{1+\tan(A_n+B_n)}.
\end{align*}

Combining (\ref{komb1}) and (\ref{komb2}), the conditions (\ref{uvjet}) and (\ref{uvjet_d}) imply
\begin{align*}
 \limsup_{n\rightarrow \infty} \frac{M}{y_n}<\infty,
\end{align*}
and hence the proposition.
\end{proof}

\hfill \break

\bibliographystyle{amsplain}

\end{document}